\providecommand{\tr}{\mathop{\rm tr}\nolimits}
\providecommand{\Pic}{\mathop{\rm Pic}\nolimits}
\providecommand{\chart}{\mathop{\rm char}\nolimits}
\providecommand{\fv}{\mathop{\rm f}\nolimits}
\providecommand{\di}{\mathop{\rm div}\nolimits}
\providecommand{\disc}{\mathop{\rm disc}\nolimits}
\providecommand{\lcm}{\mathop{\rm lcm}\nolimits}
\providecommand{\unr}{\mathop{\rm unr}\nolimits}
\providecommand{\Norm}{\mathop{\rm Norm}\nolimits}
\providecommand{\Hom}{\mathop{\rm Hom}\nolimits}
\providecommand{\Gal}{\mathop{\rm Gal}\nolimits}
\providecommand{\Spec}{\mathop{\rm Spec}\nolimits}
\providecommand{\sh}{\mathop{\rm sh}\nolimits}
\providecommand{\Aut}{\mathop{\rm Aut}\nolimits}
\providecommand{\Tv}{\mathop{\rm T}\nolimits}
\providecommand{\di}{\mathbb{Z}}
\providecommand{\Z}{\mathbb{Z}}
\providecommand{\C}{\mathbb{C}}
\providecommand{\F}{\mathbb{F}}
\providecommand{\R}{\mathbb{R}}
\providecommand{\new}{\mathrm{new}}
\theoremstyle{plain}
\newtheorem{theorem}{Theorem}[section]
\newtheorem{corollary}[theorem]{Corollary}
\newtheorem{lemma}[theorem]{Lemma}
\newtheorem{proposition}[theorem]{Proposition}
\theoremstyle{definition}
\newtheorem{definition}[theorem]{Definition}
\newtheorem{remark}[theorem]{Remark}
\newtheorem{example}[theorem]{Example}
\newtheorem{conv}{Convention}
\begin{document}

\title[Generating Picard groups]{Deterministically generating Picard groups of hyperelliptic curves over finite fields}
\author{Michiel Kosters}
%\author{Anonymous}
\address{Mathematisch Instituut
P.O. Box 9512
2300 RA Leiden
the Netherlands}
\email{mkosters@math.leidenuniv.nl}
\urladdr{www.math.leidenuniv.nl/~mkosters}
\date{\today}
\thanks{I would to thank my PhD supervisor Hendrik Lenstra for his help.}
\subjclass[2000]{11G20, 14H25, 14C22, 11R58}

%\extraline{Acknowledgements: left out because article is anonymous.}

\begin{abstract}
Let $\epsilon>0$. In this article we will present a deterministic algorithm which does the following. The input is a hyperelliptic curve $C$ of genus $g$ over a finite field $k$ of cardinality $q$ given by $y^2+h(x)y=f(x)$ such that the $x$-coordinate map is ramified at $\infty$. In time $O(g^{2+\epsilon} q^{1/2+\epsilon})$ the algorithm outputs a set of generators of the Picard group $\Pic^0_k(C)$. This extends results which others have obtained when $g=1$.

In this article we introduce a combinatorial tool, the `shape parameter', which we use together with character sum estimates from class field theory to deduce the statement.

Keywords: Picard group, hyperelliptic curve, finite field, shape parameter, deterministic algorithm
\end{abstract}

\maketitle
\tableofcontents

\section{Introduction}

This article covers some of the results of my PhD thesis written under the supervision of Hendrik Lenstra at the Universiteit Leiden. For more details, we refer to the PhD thesis (\cite{KO6}).

An algorithmic problem in arithmetic geometry is to explicitly find the group structure of the Picard group of a curve of genus $g$ over a finite
field of
size $q=p^n$. A related problem is to find a generating set of this Picard group. Let $\epsilon>0$. In this article we describe a deterministic way of finding a
generating set, when the curve is hyperelliptic, in time $O(g^{2+\epsilon} q^{1/2+\epsilon})$.

Let $C$ be a hyperelliptic curve of genus $g$ over a finite field $k$ of cardinality $q$ and characteristic $p$ given by an equation
$y^2+h(x)y=f(x)$. We require that $(f,h)$ satisfies certain conditions (see Subsection \ref{t1}) and we assume that the natural projection map to the projective line by taking the $x$-coordinate is ramified at $\infty$. Our main theorem is the following.

\begin{theorem} \label{4cext}
For any $\epsilon>0$ there is a deterministic algorithm which on input a hyperelliptic curve $C$ of genus $g$ over a finite field $k$ of cardinality
$q$ outputs a set of generators of the Picard group $\Pic^0_k(C)$ in time $O(g^{2+\epsilon} q^{1/2+\epsilon})$ . 
\end{theorem}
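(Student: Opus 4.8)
\emph{Strategy.} I will convert ``$S$ generates $\Pic^0_k(C)$'' into the dual statement ``no nontrivial character of $\Pic^0_k(C)$ is identically $1$ on $S$'', use class field theory to bound the degrees of the places I must use, and then invoke the shape parameter together with short character sums to replace the (far too large) set of all small-degree places by a thin, explicitly enumerable family of size $O(g^{1+\epsilon}q^{1/2})$.

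A finite set $S\subseteq\Pic^0_k(C)$ generates if and only if there is no nontrivial $\chi\colon\Pic^0_k(C)\to\C^\times$ with $\chi|_S\equiv 1$. Since the $x$-map is ramified at $\infty$, the point $\infty$ is a single $k$-rational place, so each closed point $v$ of $C$ gives a class $D_v:=[v-(\deg v)\,\infty]\in\Pic^0_k(C)$, and by Riemann--Roch the family of all $D_v$ generates. A nontrivial $\chi$ corresponds, by the unramified class field theory of $C$ (equivalently, by the Lang isogeny $1-\mathrm{Frob}$ of $\mathrm{Jac}(C)$, whose kernel is $\Pic^0_k(C)$), to a geometrically connected everywhere-unramified abelian cover $C_\chi\to C$ of degree $\mathrm{ord}(\chi)$; Riemann--Hurwitz gives its genus as $\mathrm{ord}(\chi)(g-1)+1$, and the twisted $L$-function is a polynomial $L(C,\chi,T)=\prod_{i=1}^{2g-2}(1-\alpha_iT)$ with $|\alpha_i|=q^{1/2}$. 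Expanding $\log L(C,\chi,T)=\sum_v\sum_{m\ge 1}m^{-1}\chi(v)^m T^{m\deg v}$ and reading off the coefficient of $T^d$ shows $\sum_{\deg v=d}(\deg v)\chi(\mathrm{Frob}_v)$ equals $-\sum_i\alpha_i^d$ up to contributions of places of degree properly dividing $d$, so its absolute value is $\le (2g-2)q^{d/2}+O(q^{d/2})$. If some nontrivial $\chi$ had $\chi(D_v)=1$ for every $v$ with $\deg v\le D$, then $\chi(\mathrm{Frob}_v)$ would be the constant $\chi(\mathrm{Frob}_\infty)^{\deg v}$ on each such fibre, forcing $\#\{v:\deg v=D\}\ll gq^{D/2}/D$; this contradicts the Weil lower bound $\#\{v:\deg v=D\}\gg q^{D}/D$ as soon as $q^{D/2}\gg g$. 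Hence it suffices to use the $D_v$ with $\deg v\le D_0$, where $D_0$ is the least integer with $q^{D_0/2}>Cg$, so $D_0=O(1+\log g/\log q)$.

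Taking all places of degree $\le D_0$ is still too costly: there are $\asymp q^{D_0}\asymp g^2q^{O(1)}$ of them (for $q$ large this is already $\asymp q$ rational points). To reach $q^{1/2}$ I use, at each degree $d\le D_0$, only a thin family, and this is the role of the shape parameter: a nonnegative-integer invariant of an effective divisor --- morally, how special its Mumford/defining-polynomial data are --- whose level sets in a fixed degree $d$ have size $\asymp q^{d-s}$ and are parametrized by a single varying coefficient of the defining polynomial. The decisive point is that $C_\chi\to\mathbb{P}^1$ is ramified only over the branch locus of $C/\mathbb{P}^1$, a divisor of degree $O(g)$; hence the Artin/Dirichlet datum over $k(x)$ attached to $\chi$ has conductor of degree $O(g)$, \emph{independently of} $\mathrm{ord}(\chi)\approx q^g$. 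A P\'olya--Vinogradov-type estimate over $k[x]$ then shows that no nontrivial $\chi$ can be constant on a shape-homogeneous family of size $c\,g\,q^{1/2}\log q$. Using one such family at each $d\le D_0$ yields $O(g^{1+\epsilon}q^{1/2})$ classes $D_v$ on which no nontrivial $\chi$ vanishes, hence a generating set. I expect this to be the main obstacle: setting up the shape parameter, proving the $\asymp q^{d-s}$ counts, and above all controlling the conductor of the character attached to $\chi$ by $O(g)$ rather than by $\mathrm{ord}(\chi)$, so that summing over only $\asymp q^{1/2}$ residues is enough to detect nontriviality.

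Finally, the algorithm: compute $g$ from $(f,h)$; fix $D_0$ and the shape threshold as above; for each degree $d\le D_0$ and each of the prescribed $O(gq^{1/2}\log q)$ defining polynomials $u(x)\in k[x]$, factor $y^2+h(x)y-f(x)$ modulo $u$ and, for every place $v$ of the wanted degree produced, record $D_v$ in reduced (Mumford) form; output the list. There are $O(g^{1+\epsilon}q^{1/2})$ candidates; each needs a reduction of $f,h$ (of degree $O(g)$), a bounded-degree factorization, and a Mumford reduction, costing $O(g^{1+\epsilon}(\log q)^{1+\epsilon})$; the total running time is $O(g^{2+\epsilon}q^{1/2+\epsilon})$, and correctness is exactly the preceding two paragraphs.
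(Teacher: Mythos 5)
Your high-level framing is sound: reduce generation to the statement that no nontrivial character $\chi$ of $\Pic^0_k(C)$ is trivial on the chosen classes, and make sure every character-sum bound carries a conductor term of size $O(g)$ rather than anything depending on $\mathrm{ord}(\chi)$. Your first reduction (a nontrivial $\chi$ cannot kill all places of degree $d$ once $q^{d/2}\gg g$, via the twisted $L$-polynomial) is also fine. But the proof collapses exactly at the step you yourself flag as the main obstacle: the thinning from all places of degree $\le D_0$ down to an explicitly enumerable family of size $O(gq^{1/2}\log q)$. First, the ``shape parameter'' is not an invariant of effective divisors with level sets of size $\asymp q^{d-s}$; in the paper it is a Fourier-analytic invariant of a subset $S$ of the additive group $k^+$, namely $\sh_{k^+}(S)=\frac{\#S}{q}\inf_f \sum_{\lambda}|f_{\lambda}|/|f_{\chi_0}|$ over $f$ supported on $S$, and it enters because one expands the indicator of the condition $x(P)\in S$ in additive characters $\lambda$ of $k$. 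Second, to bound the resulting hybrid sums $\sum_{P\in C(k)^*}\lambda(x(P))\chi([P]-[\infty'])$ one needs an abelian extension of $k(C)$ whose Galois group realizes $k^+\times\Pic^0_k(C)$ with Frobenius $(x(P),[P]-[\infty'])$ at rational points; the paper builds it by composing the extension $Y^q-Y=x$ of $k(x)$ with the unramified class field of Proposition \ref{2c843}, proves it is geometric, and computes its conductor (degree $2$ or $3$, supported at $\infty'$ only), after which Theorem \ref{2c43} gives the bound $(2g-2+s)\sqrt q$. Your appeal to ``an Artin/Dirichlet datum over $k(x)$ attached to $\chi$ of conductor $O(g)$'' does not make sense as stated ($C_\chi\to\mathbb{P}^1$ is not abelian), and no P\'olya--Vinogradov-type estimate is actually formulated, let alone proved; so the key claim that a thin prescribed family detects every nontrivial $\chi$ is missing.

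Even granting a correct thinning estimate, two further points would need attention. In characteristic $2$ with $\deg(h)=g$ there is a genuine exceptional phenomenon: by Proposition \ref{4c923} the points whose $x$-coordinates lie in a suitable additive coset generate only the index-two subgroup $\ker(\psi_C)$, so any selection of places by additive conditions on their defining polynomials must be shown to avoid or compensate for this; the paper forces $S\subseteq H_C$ for exactly this reason, and your proposal never addresses it. Also, your cost analysis assumes a deterministic factorization of $y^2+hy-f$ modulo each $u(x)$ in time polylogarithmic in $q$; deterministic square-root extraction in odd characteristic is not known to be that fast, and the paper instead pays a one-time $O(q^{1/4+\delta})$ for a quadratic non-residue and then runs Tonelli--Shanks, while in characteristic $2$ it reduces to Artin--Schreier linear algebra. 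Finally, note that the paper handles the regime of small $q$ (where no interval of size $\asymp g\sqrt q$ exists in $k$) by passing to an extension $k'$ with $\#k'\asymp g^2q$, generating $\Pic^0_{k'}(C_{k'})$ there, and pushing down by the surjective norm of Corollary \ref{2c437}; your use of higher-degree places is a plausible alternative for that regime, but it again rests on the unproved thinning estimate.
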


Such a generating set can then be used in other algorithms to deterministically determine the group structure of $\Pic_k^0(C)$. 

Let us discuss one of the main ingredients of the proof of Theorem \ref{4cext}.
Let $\infty'$ the point above $\infty$. Let $\varphi_C: C(k) \to
\Pic^0_k(C)$ be the map given by $P \mapsto [P]-[\infty']$. 
For a subset $S$ of $k$ put $C_S=\{P \in C(k): x(P) \in S\}$. 
An interval $I$ of $k$ is a subset of
the form $B+\alpha[s,\ldots,s+r]$ where $B$ is an additive subgroup of $k$, $\alpha \in k$ and $s,r \in \Z_{\geq 0}$ (or more precisely, see Definition \ref{3c540}).

Kohel and Shparlinksi (\cite[Corollary 2]{SHP}) have shown the following for $g=1$. For $S$ an interval of $k$ of cardinality greater than $15(1+\log(p))q^{1/2}$ they deduce that $\langle \varphi_C(C_S) \rangle=\Pic_k^0(C)$. We generalize and improve their result in the following ways. This possible generalization was already suggested in \cite{SHP}

\begin{theorem} \label{4c845}
Assume that $\#C(k)>(2g-2)\sqrt{q}$. Assume that $p \neq 2$ or $p=2$ and $\deg(h)<g$. Let $S \subseteq k$ be a coset of a subgroup or an interval. Put $s=2$ if $p=2$ and $s=3$ if $p \neq 2$. Put
$t=1$ if $S$ is a coset of a subgroup and $t=2$ if $S$ is an interval which is not a coset of a subgroup. Assume that 
\begin{eqnarray*}
\#S \geq 2t(2g-2+s) \sqrt{q}.
\end{eqnarray*}
Then we have $\langle \varphi_C(C_S) \rangle=\Pic_k^0(C)$. 
\end{theorem}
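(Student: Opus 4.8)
\emph{Proof sketch.}
I would argue by contradiction, turning a proper subgroup into an everywhere‑unramified cyclic cover of $C$ in which all of $C_S$ splits, and then counting such points with Weil's bounds on (incomplete) character sums. Suppose $H:=\langle\varphi_C(C_S)\rangle$ is a proper subgroup of $A:=\Pic^0_k(C)$; enlarging $H$ to a subgroup of prime index $\ell$, choose $\chi_0\colon A\to\C^\times$ of order $\ell$ with $H\subseteq\ker\chi_0$. Since $\infty'$ is $k$‑rational, $\Pic_k(C)=\Pic^0_k(C)\oplus\Z[\infty']$, so $\chi_0$ extends to a character $\chi$ of $\Pic_k(C)$ with $\chi([\infty'])=1$; then $\chi|_A=\chi_0\neq1$, so $\chi$ does not factor through the degree. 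By class field theory for $K=k(C)$, $\chi$ corresponds to a cyclic extension $L/K$ of degree $\ell$ unramified at every place, and $\chi|_A\neq1$ forces $L$ to be geometrically connected, i.e. $L=k(C')$ with $C'\to C$ finite \'etale of degree $\ell$. Reciprocity sends $[P]$ to $\mathrm{Frob}_P$, so for $P\in C(k)$ one has $\chi([P])=1$ iff $P$ splits completely in $C'$, and the hypothesis $\varphi_C(C_S)\subseteq H$ says precisely that $\chi([P])=\chi([P]-[\infty'])=1$ for every $P\in C(k)$ with $x(P)\in S$.

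Next I would view $\chi$ as a rank‑one lisse sheaf $\mathcal L_\chi$ on $C$, unramified everywhere since $L/K$ is, with trace $\chi([P])$ at a rational point $P$. Grothendieck--Lefschetz on the \emph{proper} curve $C$ gives $\big|\sum_{P\in C(k)}\chi([P])\big|\le(2g-2)\sqrt q$; for $S=k$ (where $\chi([P])=1$ for all $P\in C(k)$, as $\chi([\infty'])=1$ too) this already contradicts the hypothesis $\#C(k)>(2g-2)\sqrt q$, so that case — which is exactly where that hypothesis is needed — is the base case. For smaller $S$ one localizes: for $c\in k$ set $T_c=\sum_{P\neq\infty'}\psi\big(\tr(cx(P))\big)\chi([P])$, the sum over affine $k$‑points, which is a cohomology sum for $\mathcal L_\chi$ twisted by the Artin--Schreier sheaf of $\tr(cx)$ on $C\smallsetminus\{\infty'\}$. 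As $x$ has a pole of order $2$ at $\infty'$, the Euler--Poincar\'e formula gives $|T_c|\le(2g-2+s)\sqrt q$ for $c\neq0$, with $s=3$ when $p\neq2$ (Swan conductor $2$ at $\infty'$) and $s=2$ when $p=2$ (after absorbing an Artin--Schreier coboundary to lower the pole order to $1$, which is where the hypotheses on $p$ and $\deg h$ enter); and $|T_0|\le(2g-2)\sqrt q+1$. Likewise $\#\{P\in C(k):x(P)=a\}=1+\lambda(a)$ for a quadratic character of $F(x)$ (if $p\neq2$) resp.\ an Artin--Schreier character of $f/h^2$ (if $p=2$), whose twisted sums $R_c=\sum_{a\in k}\psi(\tr(ca))\lambda(a)$ obey the same bound; thus $\#C_S=\#S+\sum_{a\in S}\lambda(a)$.

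Finally I would expand $\mathbf 1_S$ in additive characters. If $S$ is a coset of a subgroup $B$ of $(k,+)$, then $\widehat{\mathbf 1_S}$ is supported on $B^{\perp}$ with $\sum_c|\widehat{\mathbf 1_S}(c)|=1$; using $\chi([P])=1$ on $C_S$, one gets $\#C_S=\sum_c\widehat{\mathbf 1_S}(c)T_c$, hence $\#S-(2g-2+s)\sqrt q\le\#C_S\le(2g-2+s)\sqrt q$ and therefore $\#S<2(2g-2+s)\sqrt q$ — a contradiction with $t=1$. The remaining case, $S$ an interval which is not a coset, is the crux: there $\widehat{\mathbf 1_S}$ has $\ell^1$‑norm of order $\log p$, which is exactly the source of the logarithmic loss in \cite{SHP}. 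I would remove it by replacing $\mathbf 1_S$ with a nonnegative Fej\'er‑type weight $w=\mathbf 1_{I'}\star\mathbf 1_{-I''}$ with $\#I'=\#I''=\lceil\#S/2\rceil$ and $\mathrm{supp}(w)\subseteq S$: then $\sum_a w(a)=(\#I')^2$, while Cauchy--Schwarz and Parseval give $\sum_c|\widehat w(c)|$ of size $q\,\#I'$ with no logarithm, so the estimates above yield $(\#I')^2<2\,\#I'\,(2g-2+s)\sqrt q$, i.e.\ $\#S<2t(2g-2+s)\sqrt q$ with $t=2$, the desired contradiction. The point where I expect the real work to lie is in making precise which weights $w$ are admissible for a general interval $B+\alpha[s,\dots,s+r]$ — equivalently, in quantifying how far $S$ deviates from being a coset — and this is exactly the role of the ``shape parameter'' advertised in the introduction.
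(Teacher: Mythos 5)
Your proposal is correct and follows essentially the same route as the paper: contradiction via a prime-index character, additive-character twists of the point sum over $C$ bounded by a Weil-type estimate with exactly the same conductor values $s$ (the paper obtains these via the explicit extensions $K_+$ and $k(C)_{[\infty']}$, the conductor computations of Lemmas \ref{2c477} and \ref{4c754}, and Rosen's bound in Theorem \ref{2c43}, rather than via $\ell$-adic sheaves and Grothendieck--Ogg--Shafarevich), and for intervals the Fej\'er weight $\mathbf{1}_{I'}\star\mathbf{1}_{-I''}$, which is precisely the paper's bound $\sh_k(S)\le 2$ from Lemmas \ref{3c6} and \ref{3c99}. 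The only real difference is presentational: you prove the statement directly, whereas the paper factors it through the shape-parameter criterion of Theorem \ref{4c911} together with Hasse--Weil.
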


The above theorem improves the results of \cite{SHP} in the following ways.

\begin{itemize}
 \item We allow hyperelliptic curves of any genus.
 \item We obtain similar theorems for subsets of $S \subseteq k$ which are not intervals or subgroups. For this reason we introduce the notion of the shape parameter of such a subset $S$.
 \item Our constants, as can be seen above, are a bit better. This improvement is already partially
suggested in \cite{SHP}. Furthermore, we do not have a $(1+\log{p})$ factor. This improvement is also suggested in \cite{SHP}. 
 \item We look at the case $p=2$ in the above theorem, even when $\deg(h)=g$. This case requires more work and there are exceptional cases. In
\cite{SHP}, this case is avoided by finding a similar result for the $y$-coordinate. In the end our estimates are
better when $p=2$, but there are exceptional sets coming from certain morphisms. Here is an example. 
Assume that $E$ is an elliptic curve over a finite field $k$ of characteristic $2$ given by 
$y^2+a_1xy+a_3=x^3+a_2x^2+a_4x+a_6$ with $a_1 \neq 0$. Then the map 
\begin{eqnarray*}
\psi_E: E(k) &\to& \F_2 \\
 P &\mapsto& \tr_{k/\F_2}((x(P)+a_2)/a_1^2) \\
\infty &\mapsto& 0 
\end{eqnarray*}
is a surjective morphism of groups with kernel $2E(k)$ (Proposition \ref{4c923}). Hence if we take $S=\{s \in k: \tr_{k/\F_2}((s+a_2)/a_1^2)=0\}$, a
coset of a subgroup $k$
of cardinality $q/2$, then $\langle P \in E(k): x(P) \in S \rangle=2E(k)$. 
\end{itemize}

In \cite{SHP} the authors use the aforementioned corollary (\cite[Corollary 2]{SHP}) to give a deterministic algorithm to find the group structure of the set of rational points of an elliptic curve over a finite field of size $q$ in $O(q^{1/2+\epsilon})$. By lack of good pairings, we use Theorem \ref{4c845} just to find a generating set of the Picard group. We deduce Theorem \ref{4cext} from Theorem \ref{4c845} by using intervals which are large enough.

The strategy of the proof of Theorem \ref{4c845} is the following. First we translate our problem to the calculation of certain character sums on the
finite abelian group $k^+
\times \Pic_k^0(C)$. We then construct, using class field theory, a finite geometric abelian extension of function
fields $M$ of $k(C)$ with group $G=k^+ \times \Pic_k^0(C)$, which for a point $P \in C(k) \setminus \{\infty'\}$ satisfies
$(P,M/k(C))=(x(P),[P]-[\infty'])
\in G$. Then using theorems from class field theory, we can estimate the character sums after we have calculated conductors of certain subextensions
of $M/k(C)$. In certain exceptional cases, our proof does not work. The extension $M/k(C)$ we obtain either has Galois
group which is smaller than $k^+ \times \Pic_k^0(C)$ or $M/k(C)$ is not geometric. With a bit more work, one can still work out these cases.

\section{Preliminaries}

\subsection{Curves and function fields}
We assume that the reader is familiar with the theory of curves and function fields (see for example \cite{ROS}, \cite{ST}). In this subsection we introduce some notation and recall some facts.

Let $k$ be a field. A function field over $k$ is a finitely generated field extension of $k$ of transcendence degree $1$. There is an anti-equivalence of categories between the category of normal projective curves over $k$ with finite morphisms and the category of function fields over $k$ with finite morphisms (see \cite[Proposition 3.13]{LIU}). A curve $C$ is mapped to its function field $k(C)$ and a map $C \to D$ of curves induces an inclusion $k(D) \subseteq k(C)$. We will mostly study normal projective curves by looking at their function fields. The set of non-generic points of such a curve $C$ correspond to the set of places $\mathcal{P}_{k(C)/k}$ of $k(C)$, that is, the valuation rings of $k(C)$ which contain $k$ but are not equal to $k(C)$. Note that $C(k)$ corresponds to the valuations subset of valuations of $\mathcal{P}_{k(C)/k}$ of degree $1$.

Let $K$ be a function field $k$. The full constant field of $K$ is the integral closure of $k$ in $K$. We say that $K$ is geometrically irreducible if the full constant field is $k$. The genus of $K$ is denoted by $g(K)$. Let $\di_k(K)$ be the free abelian group on $\mathcal{P}_{K/k}$. An element $D \in \di_k(K)$ is called a divisor of $K$. If $D$ is a divisor on $K$, we denote by $\deg_k(D)$ its $k$-degree. If $P \in \mathcal{P}_{K/k}$ we denote by $D_P \in \Z$ the coefficient of $D$ corresponding to $P$. The divisors of degree $0$ are denoted by $\di_k^0(K)$. An element $f$ of $K^*$ gives rise to a divisor of degree $0$, denoted by $(f)$. The Picard group, $\Pic^0_k(K)$, is defined by the exactness of the sequence
\begin{eqnarray*}
0 \to K^* \to \di_k^0(K) \to \Pic^0_k(K) \to 0.
\end{eqnarray*}
If $k$ is finite, then $\Pic^0_k(K)$ is finite.
If $L/K$ is a finite field extension, then by $\disc(L/K)$ we denote its discriminant.
Let $P \in \mathcal{P}_{L/k}$. Then by $P|_K$ we denote the restriction of $P$ to $K$. We set $\fv(P/P|_K)=\frac{\deg_k(P)}{\deg_k(P|_K)}$. We say that $L/K$ is geometric if the full constant fields of $L$ and $K$ are the same.

\subsection{Class field theory}

We assume that the reader is already familiar with class field theory (see \cite{ART2}, \cite{ROS}, \cite{SE1}). We recall some notation and statements. Let $k$ be a finite field. Let $K$ be a function field over $k$.

The aim of class field theory is to describe abelian extensions of $K$. Let $L/K$ be a finite abelian Galois extension of $K$ with group $G$. Class field theory associates to this extension a divisor $\mathfrak{f}(L/K)$, called the conductor. This divisor gives information about the ramified places. If $L$ is the compositum of $L_1/K$ and $L_2/K$, then one has $\mathfrak{f}(L/K)=\lcm(\mathfrak{f}(L_1/K),\mathfrak{f}(L_2/K))$. Let $M/K$ be a finite Galois extension with group $G$ and let $\chi \in \Hom(G,\C^*)$. We set $\mathfrak{f}(\chi)=\mathfrak{f}(L^{\ker(\chi)}/K)$. The set of unramified primes in $\mathcal{P}_{K/k}$ is denoted by $\unr(L/K)$. The place of degree $1$ in $\unr(L/K)$ are denoted by $\unr^1(L/K)$. For a prime $P \in\unr(L/K)$ we denote by $(P,L/K) \in G$ its Frobenius element. 
If $P \in \mathcal{P}_{L/k}$ we denote by $P|_K$ its restriction to $K$. We have a map $\Norm_{L/K}: \di_{k}^0(L) \to \di_k^0(K)$ which maps a place $P$ to $\fv(P/P|_K) P|_K$. This map induces a map $\Norm_{L/K}: \Pic^0_k(L) \to \Pic^0_k(K)$. 

Suppose $M/K$ is a finite abelian extension in some algebraic closure of $L$. Then one has for $D' \in \di_k(L)$ the equality
\begin{eqnarray*}
 (D',LM/L)|_M = (\Norm_{L/K}(D'),M/K).
\end{eqnarray*}

Class field theory gives us the following.

\begin{proposition} \label{2c843}\index{$K_{[D]}$}
 Let $K$ be a function field over $k$ and let $D \in \di_k(K)$ be of degree $1$. Then the maximal
abelian unramified extension of $K$ is the compositum of the following two disjoint extensions: $\overline{k} \cdot K$ and a unique finite
subextension
$K_{[D]}$ with Galois group isomorphic to
$\Pic_k^0(K)$ such that $(D,K_{[D]}/K)=0$. For $D' \in \di_k(K)$ we have
$(D',K_{[D]}/K)=[D']-\deg_k(D')[D] \in \Pic_k^0(K)$.
\end{proposition}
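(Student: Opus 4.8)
The plan is to deduce this from the function-field version of unramified (``Hilbert class field'') class field theory and then unwind the identification in terms of divisor classes. Recall the standard statement I would quote: the maximal everywhere-unramified abelian extension $H/K$ has Galois group identified, via the Artin reciprocity map, with the profinite completion $\widehat{\Pic_k(K)}$ of the group $\Pic_k(K)$ of \emph{all} divisor classes of $K$ (of every degree), in such a way that a prime divisor $P$ is sent to its class $[P]$; if one does not wish to use this as a black box it is derived from idele-theoretic reciprocity with empty ramification set. The first step is simply to record this precisely, including additivity of the symbol $P \mapsto (P,H/K)$ in $P$, so that it extends to a homomorphism $\di_k(K) \to \Gal(H/K)$.

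Second, I would use $D$ to split the degree sequence. The hypothesis $\deg_k(D)=1$ splits $0 \to \Pic_k^0(K) \to \Pic_k(K) \xrightarrow{\deg_k} \Z \to 0$, via the isomorphism $\Pic_k(K) \xrightarrow{\sim} \Pic_k^0(K) \oplus \Z$, $c \mapsto (c-\deg_k(c)[D],\,\deg_k(c))$, and hence, since $\Pic_k^0(K)$ is finite, $\widehat{\Pic_k(K)} \cong \Pic_k^0(K) \oplus \widehat{\Z}$. Under reciprocity the factor $\widehat{\Z}$ (on which the class of any degree-$1$ divisor, in particular $[D]$, is a topological generator) corresponds to the constant-field extension $\overline k \cdot K=\bigcup_n \F_{q^n}K$, which is everywhere unramified with group $\widehat{\Z}$ generated by the $q$-power Frobenius; the complementary finite factor $\Pic_k^0(K)$ cuts out a finite subextension $K_{[D]}$ of $H/K$. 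Because the displayed decomposition is a direct sum, $K_{[D]}$ and $\overline k\cdot K$ are disjoint over $K$ and their compositum is $H$; in particular $K_{[D]}$ is geometric.

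Third, I would read off the reciprocity law on $K_{[D]}$: composing $P\mapsto[P]$ with the projection $\widehat{\Pic_k(K)}\to\Pic_k^0(K)$, $c\mapsto c-\deg_k(c)[D]$, and extending additively, gives $(D',K_{[D]}/K)=[D']-\deg_k(D')[D]$ for every $D'\in\di_k(K)$; taking $D'=D$ gives $(D,K_{[D]}/K)=0$. For uniqueness, any intermediate field $K\subseteq L\subseteq H$ with $(D,L/K)=0$ corresponds to a quotient of $\widehat{\Pic_k(K)}$ killing $[D]$ — equivalently killing the closed subgroup it generates, which is the whole $\widehat{\Z}$ summand — hence a quotient of $\Pic_k^0(K)$, so $L\subseteq K_{[D]}$; and if moreover $\Gal(L/K)\cong\Pic_k^0(K)$ abstractly, finiteness of $\Pic_k^0(K)$ forces this self-surjection to be an isomorphism, so $L=K_{[D]}$.

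The only real content — the ``hard part'' — is the input from class field theory: pinning down the correct normalization of unramified CFT over a function field, i.e.\ the identification $\Gal(H/K)\cong\widehat{\Pic_k(K)}$ compatible with $P\mapsto[P]$, while keeping the arithmetic ($\widehat{\Z}$, constant-field) part cleanly separated from the geometric ($\Pic_k^0$) part. Everything after that is the elementary linear algebra of the splitting determined by $D$.
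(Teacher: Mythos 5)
Your proposal is correct: the identification of the Galois group of the maximal abelian unramified extension with the profinite completion of the full divisor class group, the splitting $\Pic_k(K)\cong\Pic_k^0(K)\oplus\Z$ induced by the degree-one divisor $D$, and the resulting separation into the constant-field part $\overline{k}\cdot K$ and the finite geometric part $K_{[D]}$, together with the uniqueness argument, is exactly the standard derivation. The paper itself gives no proof beyond invoking class field theory (citing Artin--Tate, Rosen, Serre), so your argument coincides with the intended one and fills in the details correctly.
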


\begin{corollary} \label{2c437}
Let $k'$ be a finite extension of $k$. Let $K$ be a function field over $k$. Then the
map $\Norm_{Kk'/K}: \Pic^0_{k'}(Kk') \to \Pic^0_k(K)$ is surjective.
\end{corollary}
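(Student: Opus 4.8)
The plan is to deduce this directly from Proposition \ref{2c843} via a Frobenius/Chebotarev-type argument applied to the Hilbert class field. First I would fix a divisor $D \in \di_k(K)$ of degree $1$ (which exists, e.g.\ because $\Pic^0$-theory over finite fields guarantees a degree-$1$ divisor class, or one can take a suitable place). Let $K_{[D]}/K$ be the unramified abelian extension with $\Gal(K_{[D]}/K) \cong \Pic^0_k(K)$ furnished by Proposition \ref{2c843}, the isomorphism being $(D',K_{[D]}/K) = [D'] - \deg_k(D')[D]$. The key observation is that $K_{[D]}$ and $\overline{k}\cdot K$ are linearly disjoint over $K$; in particular $K_{[D]}$ and $Kk'$ are linearly disjoint over $K$ for every finite extension $k'/k$, so restriction gives an isomorphism $\Gal(K_{[D]}k'/Kk') \xrightarrow{\sim} \Gal(K_{[D]}/K) \cong \Pic^0_k(K)$.

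Next I would use the norm-functoriality of the Artin map, namely the compatibility $(D',LM/L)|_M = (\Norm_{L/K}(D'),M/K)$ recalled just before Proposition \ref{2c843}, applied with $L = Kk'$ and $M = K_{[D]}$. For any divisor class $c \in \Pic^0_{k'}(Kk')$, pick a degree-$0$ divisor $D'$ on $Kk'$ representing it; then $(\Norm_{Kk'/K}(D'), K_{[D]}/K)$ equals the restriction to $K_{[D]}$ of the Frobenius $(D', K_{[D]}k'/Kk')$. Because $\Norm_{Kk'/K}(D')$ has degree $0$ (the norm preserves degree up to the residue degree bookkeeping built into $\fv$, and here degrees are scaled consistently so a degree-$0$ divisor maps to a degree-$0$ divisor), Proposition \ref{2c843} identifies this Artin symbol with the class $[\Norm_{Kk'/K}(D')] \in \Pic^0_k(K)$. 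Hence the image of $\Norm_{Kk'/K}$ is exactly the image of the composite $\Pic^0_{k'}(Kk') \to \Gal(K_{[D]}k'/Kk') \to \Gal(K_{[D]}/K)$ under the isomorphism with $\Pic^0_k(K)$.

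It therefore suffices to show the second map, restriction $\Gal(K_{[D]}k'/Kk') \to \Gal(K_{[D]}/K)$, is surjective when precomposed with the Artin map from $\Pic^0_{k'}(Kk')$ — but in fact I will show the Artin map $\Pic^0_{k'}(Kk') \to \Gal(K_{[D]}k'/Kk')$ is itself surjective, since $K_{[D]}k'/Kk'$ is an unramified abelian extension whose Galois group is generated by Frobenius elements of degree-$1$ places of $Kk'$, and each such Frobenius is the Artin symbol of the corresponding prime divisor, which has degree $1$ over $k'$ hence lies in (the degree-$1$ part of) $\di_{k'}(Kk')$; subtracting a fixed degree-$1$ divisor shows every Frobenius, hence all of $\Gal(K_{[D]}k'/Kk')$, is hit from $\Pic^0_{k'}(Kk')$. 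Composing with the isomorphism $\Gal(K_{[D]}k'/Kk') \cong \Gal(K_{[D]}/K) \cong \Pic^0_k(K)$ gives surjectivity of $\Norm_{Kk'/K}$.

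The main obstacle I anticipate is purely bookkeeping: making sure the degree normalizations match across the two constant fields $k$ and $k'$ so that "degree-$0$ divisor on $Kk'$" maps under $\Norm_{Kk'/K}$ to a "degree-$0$ divisor on $K$", and confirming that the linear disjointness of $K_{[D]}$ from $\overline{k}\cdot K$ (guaranteed by Proposition \ref{2c843}) really does give the clean identification $\Gal(K_{[D]}k'/Kk') \cong \Gal(K_{[D]}/K)$ rather than a proper subgroup. Once these compatibilities are in place the argument is essentially formal, and one could alternatively phrase it even more cheaply: $K_{[D]}k'$ is an unramified abelian extension of $Kk'$, so its Galois group is a quotient of $\Pic^0_{k'}(Kk')$ by Proposition \ref{2c843} applied over $k'$, while it surjects onto $\Gal(K_{[D]}/K) \cong \Pic^0_k(K)$ by disjointness; chasing a degree-$1$ divisor through both Artin maps shows the resulting surjection $\Pic^0_{k'}(Kk') \twoheadrightarrow \Pic^0_k(K)$ is exactly $\Norm_{Kk'/K}$.
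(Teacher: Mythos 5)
Your overall route is the same as the paper's: the paper's proof of Corollary \ref{2c437} is precisely the one-line observation that Proposition \ref{2c843} yields a surjection $\Pic^0_{k'}(Kk') \to \Pic^0_k(K)$ (via $K_{[D]}$, its disjointness from $\overline{k}\cdot K$, and the Artin map over $k'$), which the compatibility $(D',LM/L)|_M=(\Norm_{L/K}(D'),M/K)$ identifies with $\Norm_{Kk'/K}$; your proposal simply writes this out, including the degree bookkeeping.

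One step as you state it is not sound, however: the claim that $\Gal(K_{[D]}k'/Kk')$ is ``generated by Frobenius elements of degree-$1$ places of $Kk'$'', from which you deduce surjectivity of the Artin map on $\Pic^0_{k'}(Kk')$. A function field over a finite field need not possess any place of degree $1$, and even when it does, generation of the Galois group by their Frobenii is not automatic (it is essentially the kind of statement Theorem \ref{4c845} is devoted to); moreover, the ``subtract a fixed degree-$1$ divisor'' step only puts the differences $(P,K_{[D]}k'/Kk')\,(E,K_{[D]}k'/Kk')^{-1}$ into the image, and these can generate a proper subgroup even when the symbols $(P,K_{[D]}k'/Kk')$ themselves generate everything. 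Your alternative ``cheaper'' argument is the correct repair and is what the paper intends: apply Proposition \ref{2c843} over $k'$. The one point to make explicit there is that $K_{[D]}k'/Kk'$ is geometric, i.e.\ $K_{[D]}k'\cap \overline{k}K=Kk'$, which follows by Galois theory from the disjointness of $K_{[D]}$ and $\overline{k}\cdot K$ asserted in Proposition \ref{2c843}; this is needed because for a non-geometric unramified abelian extension the Artin symbols of degree-zero classes do not fill the Galois group (a constant field subextension receives only trivial symbols from $\Pic^0_{k'}(Kk')$, the group of an arbitrary finite unramified abelian extension being a quotient of $\Pic^0_{k'}(Kk')\times\hat{\Z}$ rather than of $\Pic^0_{k'}(Kk')$). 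With that fix, your argument coincides with the paper's proof.
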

\begin{proof}
Proposition \ref{2c843} gives a surjective map $\Pic^0_{k'}(Kk') \to \Pic^0_k(K)$ and one easily checks that it agrees with the norm.
\end{proof}

\begin{theorem} \label{2c43}
Let $L/K$ be a geometric Galois extension of function fields over $k$ with group $G$. 
Assume that we have an injective morphism $\chi \in \Hom(G,\C^*)$. Then we have
\begin{eqnarray*}
| \sum_{P \in \unr^1(L/K)} \deg_k(P)\chi((P,L/K))| \leq m q^{1/2},
\end{eqnarray*}
where $m=2g(K)-2+\deg_k\left(\mathfrak{f}(\chi) \right)$.
It is an equality if $m=1$. 
\end{theorem}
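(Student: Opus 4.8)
The plan is to identify the displayed sum, up to sign, with the first power sum of the reciprocal roots of an $L$-function attached to $\chi$, and then to invoke Weil's Riemann Hypothesis for curves over finite fields. Since $\chi$ is injective, $\ker\chi=\{1\}$, so $L^{\ker\chi}=L$ and hence $\mathfrak f(\chi)=\mathfrak f(L/K)$; we assume $G\neq\{1\}$, so that $\chi$ is a nontrivial character of $G$. By Artin reciprocity for the abelian extension $L/K$ I would regard $\chi$ as a nontrivial character of the ray class group of $K$ modulo $\mathfrak f:=\mathfrak f(\chi)$; its value on (the class of) a prime $P$ is $\chi((P,L/K))$, and the primes of $\mathcal P_{K/k}$ not dividing $\mathfrak f$ are exactly those in $\unr(L/K)$.

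Next I would introduce the associated $L$-function
\begin{eqnarray*}
L(T,\chi)=\prod_{P\in\unr(L/K)}\bigl(1-\chi((P,L/K))\,T^{\deg_k P}\bigr)^{-1}.
\end{eqnarray*}
Because $L/K$ is geometric, the nontrivial character $\chi$ does not factor through any constant field extension of $K$, and by the classical theory of such $L$-functions (Weil; see e.g.\ \cite{ROS}) $L(T,\chi)$ is then a \emph{polynomial} in $T$, of degree exactly $m=2g(K)-2+\deg_k\mathfrak f(\chi)\ge 0$, satisfying a functional equation. (Alternatively the degree formula can be extracted by comparing the factorization $\zeta_L(T)=\prod_{\psi\in\Hom(G,\C^*)}L(T,\psi)$, the conductor--discriminant formula, and the Riemann--Hurwitz genus formula.) I would write $L(T,\chi)=\prod_{i=1}^m(1-\omega_i T)$; since $L(T,\chi)$ divides the numerator of $\zeta_L(T)$, Weil's Riemann Hypothesis (applied to the curve whose function field is $L$, which has full constant field $k$ as $L/K$ is geometric) gives $|\omega_i|=q^{1/2}$ for all $i$.

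Finally I would apply $T\frac{d}{dT}\log(\cdot)$ to both expressions for $L(T,\chi)$ and compare the coefficients of $T^1$: from the Euler product this coefficient is $\sum_{P\in\unr^1(L/K)}\deg_k(P)\,\chi((P,L/K))$ (places of degree $>1$ and higher prime powers contribute only to coefficients of $T^r$ with $r\ge 2$), while from the factored form it is $-\sum_{i=1}^m\omega_i$. Hence
\begin{eqnarray*}
\Bigl|\sum_{P\in\unr^1(L/K)}\deg_k(P)\,\chi((P,L/K))\Bigr|=\Bigl|\sum_{i=1}^m\omega_i\Bigr|\le\sum_{i=1}^m|\omega_i|=m\,q^{1/2},
\end{eqnarray*}
and when $m=1$ the left-hand side equals $|\omega_1|=q^{1/2}$, giving equality. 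The substantive input is Weil's Riemann Hypothesis, which we simply cite; the only genuinely technical point in the argument is the exact value of $\deg L(T,\chi)$, and the hypothesis that $L/K$ be geometric is precisely what prevents a pole of $L(T,\chi)$ coming from a constant field subextension, so that the Riemann Hypothesis applies and the degree is $m$ as claimed.
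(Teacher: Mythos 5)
Your argument is correct and is essentially the paper's approach: the paper proves this by a one-line citation to Rosen's Theorem 9.16B, which is exactly the statement you reprove via the primitive $L$-function of $\chi$ (a polynomial of degree $m$ since $\chi$ is nontrivial and, by geometricity, does not factor through a constant field extension), Weil's Riemann Hypothesis for the inverse roots, and comparison of the $T^1$-coefficients of the logarithmic derivative. Your explicit caveat that $G\neq\{1\}$ (so $\chi$ is nontrivial) is a reasonable reading of the implicit hypothesis in the statement.
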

\begin{proof}
This follows from \cite[Theorem 9.16B]{ROS}.
\end{proof}

Later we will need to compute some conductors. The following lemma is useful.

\begin{lemma} \label{2c477}
Let $K$ be a function field over $k$. Let $K_s$ be a separable closure of $K$. Let $L, M$ be finite
abelian Galois extensions of $K$ inside $K_s$ of prime degree $p$ respectively prime degree $l$ with $L \cap M=K$. Let $v \in \mathcal{P}_{K/k}$ and
suppose that $r=\mathfrak{f}(L/K)_v \in \Z_{\geq 1}$ and $s=\mathfrak{f}(M/K)_v \in \Z_{\geq 1}$. Let $w$ be the unique extension of $v$ to
$L$. Assume that $LM/L$ is ramified at
$w$ if $p=l$ and $r=s$. Then the following hold:
\begin{enumerate}
 \item $LM/K$ is totally ramified at $v$;
 \item if $p \neq l$ or $r \neq s$, we have $\mathfrak{f}(LM/L)_{w}=(p-1)\max(0,s-r)+s$;
 \item if $p=l$ and $r=s$, we have $r \geq \mathfrak{f}(LM/L)_{w} \geq t$ where $t=2$ if $p$ is the residue field characteristic of
$v$ and $1$ otherwise.
\end{enumerate}
\end{lemma}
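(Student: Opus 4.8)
The plan is to reduce everything to the completion of $K$ at $v$. Write $F=K_v$; since $L/K$ has prime degree $p$ and is ramified at $v$ it is totally ramified there, so there is a unique place $w$ of $L$ over $v$, and likewise a unique place of $M$ over $v$. Let $E_1=L_w$ and let $E_2$ be the completion of $M$ at its place over $v$, so $E_1/F$ and $E_2/F$ are totally ramified of degrees $p,l$ and conductor exponents $r,s$. Conductor, different and discriminant exponents and ramification indices are local invariants; in particular, for $\tilde w$ a place of $LM$ over $v$ one has $(LM)_{\tilde w}=E_1E_2$ (the compositum inside an algebraic closure of $F$) and $\mathfrak{f}(LM/L)_w$ is the conductor exponent of $E_1E_2/E_1$. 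Since $L\cap M=K$, $\Gal(LM/K)\cong\Gal(L/K)\times\Gal(M/K)$, so $G:=\Gal(E_1E_2/F)$ is a subgroup of $\Z/p\times\Z/l$ both of whose projections are surjective (as $v$ ramifies in $L$ and in $M$). I then split into the three cases.

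For part (1): if $p\ne l$, the only such subgroup is all of $\Z/p\times\Z/l$, so $[E_1E_2:F]=pl$, and since $e(E_1E_2/F)$ is then a multiple of $\lcm(p,l)=pl$ it equals $pl$: total ramification. If $p=l$ and $r=s$, the hypothesis gives that $E_1E_2/E_1$ is ramified, hence (prime degree) totally ramified with $e(E_1E_2/E_1)=p$, while $E_1\ne E_2$ locally forces $[E_1E_2:F]=p^2$; multiplicativity of $e$ in $F\subseteq E_1\subseteq E_1E_2$ yields $e(E_1E_2/F)=p^2$. The case $p=l$, $r\ne s$ is the one that needs an idea: if $E_1E_2/E_1$ were unramified then, as $E_1\ne E_2$ locally (otherwise $r=s$), it would be unramified of degree $p$, so $E_1E_2/F$ would contain an unramified degree-$p$ subfield $F'$, distinct from the ramified fields $E_1,E_2$; then $E_1E_2=E_1F'=E_2F'$, and since conductor exponents are preserved by the unramified base change $-/F'$ this would give $r=\mathfrak{f}(E_1F'/F')=\mathfrak{f}(E_2F'/F')=s$, a contradiction; hence $E_1E_2/E_1$ is ramified, totally so of degree $p$, and again $e(E_1E_2/F)=p^2$.

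For parts (2) and (3) I compute $\mathfrak{f}(E_1E_2/E_1)_w$ through the different, $G$ now being totally ramified. Since $E_1/F$ is cyclic of prime degree $p$ with conductor $r$, the Artin conductor--discriminant formula (its $p-1$ nontrivial characters are faithful of conductor $r$) together with $\disc(E_1/F)=\Norm_{E_1/F}(\mathfrak{d}_{E_1/F})$ gives the different exponent $d(E_1/F)=(p-1)r$; the same identities give $d(E_1E_2/F)=v_F\big(\disc(E_1E_2/F)\big)=\sum_{\chi\ne 1}\mathfrak{f}(\chi)_v$, which I group by kernel (each index-$p$ or index-$l$ subgroup $H$ contributes $|G/H|-1$ characters, all of conductor $\mathfrak{f}((E_1E_2)^H/F)_v$). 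The conductors of the intermediate fields are then pinned down via the compositum formula $\mathfrak{f}(AB/F)=\lcm(\mathfrak{f}(A/F),\mathfrak{f}(B/F))$ recalled in the preliminaries: any intermediate field $N\notin\{E_1,E_2\}$ satisfies $E_1\cdot N=E_1E_2=E_2\cdot N$, so $\max(r,\mathfrak{f}(N/F)_v)=\mathfrak{f}(E_1E_2/F)_v=\max(r,s)=\max(s,\mathfrak{f}(N/F)_v)$. Finally the tower formula $d(E_1E_2/F)=d(E_1E_2/E_1)+[E_1E_2:E_1]\cdot d(E_1/F)$ together with conductor--discriminant for the cyclic prime-degree extension $E_1E_2/E_1$ (so $d(E_1E_2/E_1)=([E_1E_2:E_1]-1)\,\mathfrak{f}(E_1E_2/E_1)_w$) recovers $\mathfrak{f}(E_1E_2/E_1)_w$.

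Running the arithmetic: if $p\ne l$, the only intermediate fields are $E_1,E_2$ of conductors $r,s$, so $d(E_1E_2/F)=(l-1)s+(p-1)r+(p-1)(l-1)\max(r,s)$, whence $d(E_1E_2/E_1)=(l-1)\big(s+(p-1)\max(0,s-r)\big)$ and $\mathfrak{f}(E_1E_2/E_1)_w=(p-1)\max(0,s-r)+s$; if $p=l$ and (say) $r<s$, the compositum identity forces every intermediate field except $E_1$ to have conductor $s$, giving $d(E_1E_2/F)=(p-1)r+p(p-1)s$ and $\mathfrak{f}(E_1E_2/E_1)_w=p(s-r)+r=(p-1)(s-r)+s$, and $r>s$ is symmetric. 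For part (3) ($p=l$, $r=s$) the same bookkeeping gives $\mathfrak{f}(E_1E_2/E_1)_w=\big(\sum_{i}c_i\big)-(p-2)r$ with $c_1,\dots,c_{p-1}$ the conductors of the remaining intermediate fields; total ramification forces $1\le c_i\le r$, so $\mathfrak{f}(E_1E_2/E_1)_w\le r$, while $\mathfrak{f}(E_1E_2/E_1)_w\ge t$ is immediate since $E_1E_2/E_1$ is ramified of prime degree $p$, hence tamely ramified with conductor $1$ if $p$ is not the residue characteristic of $v$ and wildly ramified with conductor $\ge 2$ if it is. The one genuinely delicate point is the unramified-base-change argument proving total ramification when $p=l$, $r\ne s$; the remaining steps are a careful assembly of the conductor--discriminant and tower formulas for the different.
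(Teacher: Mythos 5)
Your proof is correct and follows the same route the paper intends: the paper's own proof is just the remark that the key ingredient is the F\"uhrerdiskriminantenproduktformel, and your argument---localizing at $v$, then combining the conductor--discriminant formula with the different tower formula and the compositum conductor identity, plus the unramified-base-change argument to get total ramification when $p=l$ and $r\neq s$---is a sound elaboration of exactly that. The case analysis and arithmetic in all three parts check out, so there is no gap.
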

\begin{proof}
The most important ingredient in the proof is the F\"uhrerdiskriminantenpro\-duktformel (see \cite{SE1}).
\end{proof}

One has the following lemma.

\begin{lemma} \label{2c555}
 Let $K/k$ be a function field where $k$ is a finite field. Let $L/K$ be a finite abelian Galois extension with group $G$. Let $\chi,\chi' \in
\Hom(G,\C^*)$.  
Then we have $\mathfrak{f}( \chi \cdot \chi') \leq \lcm(\mathfrak{f}(\chi),\mathfrak{f}(\chi'))$, with
equality at $P \in \mathcal{P}_{K/k}$ if we have $\mathfrak{f}(\chi)_P \neq \mathfrak{f}(\chi')_P$ or if the orders of $\chi$ and
$\chi'$ are coprime.
\end{lemma}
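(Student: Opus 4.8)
The plan is to reduce the claim to a local statement and then to an elementary fact about characters of a filtered abelian group. Since the conductor is an effective divisor whose coefficient at a place $P$ is a local invariant, and since $\lcm(\mathfrak{f}(\chi),\mathfrak{f}(\chi'))_P=\max(\mathfrak{f}(\chi)_P,\mathfrak{f}(\chi')_P)$, it suffices to fix $P\in\mathcal{P}_{K/k}$, pick a place $Q$ of $L$ above $P$, and replace $G$ by the decomposition group $H\subseteq G$ at $Q$ (which is the Galois group of the corresponding local extension, the residue extension being separable). Equip $H$ with its ramification filtration $(H^n)_{n\ge 0}$ in the upper numbering, so that $H^0$ is the inertia group, $H^{n+1}\subseteq H^n$, and $H^n=1$ for $n$ large. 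The input I will quote from the standard theory of the Artin conductor for abelian extensions (\cite{SE1}), via Hasse--Arf, exactly as is already used implicitly in Lemma \ref{2c477}, is that for every $\psi\in\Hom(G,\C^*)$,
\begin{eqnarray*}
\mathfrak{f}(\psi)_P=\min\{\,n\in\Z_{\ge 0}:\psi|_{H^n}=1\,\},
\end{eqnarray*}
with the right-hand side equal to $0$ precisely when $\psi$ is unramified at $P$. Two immediate consequences: $\mathfrak{f}(\psi)_P$ depends only on $\psi|_H$, and $\mathfrak{f}(\psi^j)_P\le\mathfrak{f}(\psi)_P$ for every power $\psi^j$, because $\psi|_{H^n}=1$ forces $\psi^j|_{H^n}=1$.

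Granting this, the inequality is immediate: with $n=\max(\mathfrak{f}(\chi)_P,\mathfrak{f}(\chi')_P)$ both $\chi$ and $\chi'$, hence also $\chi\chi'$, are trivial on $H^n$, so $\mathfrak{f}(\chi\chi')_P\le n$; ranging over $P$ gives $\mathfrak{f}(\chi\chi')\le\lcm(\mathfrak{f}(\chi),\mathfrak{f}(\chi'))$. For the equality clause, suppose first $a:=\mathfrak{f}(\chi)_P\ne\mathfrak{f}(\chi')_P=:b$, say $a>b$ (so $a\ge 1$). Then $\chi'|_{H^{a-1}}=1$, as $H^{a-1}\subseteq H^b$, while $\chi|_{H^{a-1}}\ne 1$ by minimality of $a$, so $(\chi\chi')|_{H^{a-1}}=\chi|_{H^{a-1}}\ne 1$ and therefore $\mathfrak{f}(\chi\chi')_P\ge a$; with the upper bound this forces $\mathfrak{f}(\chi\chi')_P=a=\max(a,b)$. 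Suppose instead the orders $m$ of $\chi$ and $m'$ of $\chi'$ are coprime. Then $(\chi\chi')^{m'}=\chi^{m'}$ generates $\langle\chi\rangle$ since $\gcd(m,m')=1$, so $\chi\in\langle\chi\chi'\rangle$, and symmetrically $\chi'\in\langle\chi\chi'\rangle$; hence by the power-monotonicity above $\mathfrak{f}(\chi)_P\le\mathfrak{f}(\chi\chi')_P$ and $\mathfrak{f}(\chi')_P\le\mathfrak{f}(\chi\chi')_P$, so $\mathfrak{f}(\chi\chi')_P\ge\max(\mathfrak{f}(\chi)_P,\mathfrak{f}(\chi')_P)$ and again equality holds.

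The only genuinely nontrivial point is the local description of the conductor that opens the argument: that $\mathfrak{f}(\psi)_P$ is the least $n$ annihilating $\psi$ on the upper-numbering ramification group $H^n$. This rests on the integrality of the upper jumps of an abelian local extension (Hasse--Arf) together with the fact that the Artin conductor is a local, additive invariant; equivalently one may phrase it through local class field theory as the least $n$ with $\psi$ trivial on the unit group $1+\mathfrak{m}_P^{\,n}$. Once that is cited, everything else is the elementary filtration bookkeeping above, with no special difficulty in characteristic $p$ and with the residue extension playing no role (it only distinguishes $H$ from its subgroup $H^0$, which is irrelevant to ramification).
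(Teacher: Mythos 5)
Your proof is correct. Note that the paper gives no proof of this lemma at all---it is stated as a standard fact from conductor theory, with \cite{SE1} cited in the surrounding lemmas---so your argument simply supplies the justification the author leaves implicit, and it does so along the expected route: reduce to the local exponent at $P$ via the decomposition group, use the description $\mathfrak{f}(\psi)_P=\min\{n\in\Z_{\geq 0}:\psi|_{H^n}=1\}$ in terms of upper-numbering ramification groups (valid by Hasse--Arf for abelian extensions, equivalently by local class field theory through the unit filtration), deduce monotonicity of the exponent under taking powers of a character, and then handle the two equality cases exactly as you do (for $\mathfrak{f}(\chi)_P\neq\mathfrak{f}(\chi')_P$ by nontriviality of $\chi\chi'$ on $H^{a-1}$; for coprime orders by observing $\chi,\chi'\in\langle\chi\chi'\rangle$). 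All steps check out, including the identification of $\lcm$ at $P$ with the maximum of the local exponents and the fact that the paper's $\mathfrak{f}(\chi)=\mathfrak{f}(L^{\ker(\chi)}/K)$ has local exponent equal to the Artin conductor exponent of $\chi$. One cosmetic remark: in the local class field theory phrasing, the $n=0$ case should refer to triviality on $\mathcal{O}_P^*$ rather than on $1+\mathfrak{m}_P^0$, but this aside does not affect the argument.
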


\subsection{Hyperelliptic curves} \label{t1}

The results of this subsection can be partially found in \cite[Subsection 7.4.3]{LIU}.
For a polynomial $f \in k[x]$ we define $f_j$ by $f=\sum_{i} f_i x^i$.

Let $k$ be a perfect field. A function field $K/k$ is called \emph{hyperelliptic} if it has full constant field $k$, if the
genus satisfies $g(K) \geq 1$, and there exists $x \in K$ with $[K:k(x)]=2$. 

Let $g \in \Z_{\geq 1}$. Consider $(f,h) \in k[x]^2$ with the following properties:
\begin{enumerate}
\item $\deg(f) \in \{2g+1,2g+2\}$ 
\item $y^2+hy-f$ is separable and irreducible in $k(x)[y]$;
\item if $\chart(k) \neq 2$ the following hold:
\begin{enumerate}
 \item $h=0$;
 \item $f$ is separable in $k[x]$;
\end{enumerate}
\item if $\chart(k)=2$, then the following hold:
\begin{enumerate}
\item $\deg(h) \leq g+1$;
\item $(h,h'^2f+f'^2)=k[x]$;
\item $(h_{g+1},h_g^2 f_{2g+2}+f_{2g+1}^2)=k$. 
\end{enumerate}
\end{enumerate}
Set $K_{f,h}=k(x)[y]/(y^2+hy-f)$ with natural inclusion $k(x) \subseteq K_{f,h}$. Then $K_{f,h}$ is a hyperelliptic curve of genus $g$. 
Furthermore, set $U'=\Spec(k[x,y]/(y^2+h(x)y-f(x))$, $V'=\Spec(k[x',y']/(y'^2+h_{\infty}(x')y'-f_{\infty}(x'))$ where
$h_{\infty}(x')=h(1/x')x'^{g+1}$ and $f_{\infty}(x')=f(1/x')x'^{2g+2}$. Let $X = U' \cup V'$ glued together by $D(x) \cong D(x')$ with relations $x=1/x'$ and $y=x^{g+1}y'$. Then $X$ is a smooth model for the curve corresponding to $K_{f,h}$. For the discriminant one has
\begin{eqnarray*}
\disc(K_{f,h}/k(x))=\left\{ \begin{array}{cc} 
\infty+(f) &  \textrm{if } \chart(k) \neq 2, \deg(f)=2g+1 \\
(f) &  \textrm{if } \chart(k) \neq 2, \deg(f)=2g+2 \\
(2g+2)\infty+2(h) & \textrm{if } \chart(k)=2.
\end{array} \right.
\end{eqnarray*}

Conversely, any hyperelliptic curve of genus $g$ has such a model.

\section{Shape parameter}

In this section, let $G$ be a finite abelian group which we denote multiplicatively. Let $\C[G]$ be the group ring of $G$ over $\C$. For $\chi \in G^{\vee}=\Hom(G,\C^*)$ and $f=\sum_{g \in G} c_g g \in \C[G]$ where $c_g \in \C$ we set
\begin{eqnarray*}
 f_{\chi}= \sum_{g \in G} c_g \chi(g^{-1}).
\end{eqnarray*}

\begin{proposition} \label{45}
Let $f=\sum_{g \in G} c_g \in \C[G]$. Then one has
\begin{eqnarray*}
f = \frac{1}{\#G} \sum_{g \in G} \sum_{\chi \in G^{\vee}} c_{\chi} \chi(g)g.
\end{eqnarray*}
\end{proposition}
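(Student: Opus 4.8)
The plan is to prove this as Fourier inversion on the finite abelian group $G$, by comparing the coefficient of each fixed group element on the two sides. Write $f = \sum_{g \in G} c_g g$, and recall that by the definition preceding the proposition $c_{\chi} = f_{\chi} = \sum_{g \in G} c_g \chi(g^{-1})$. Fix $h \in G$. The coefficient of $h$ in the left-hand side is $c_h$. The coefficient of $h$ in the right-hand side is
\[
\frac{1}{\#G} \sum_{\chi \in G^{\vee}} c_{\chi} \chi(h),
\]
and substituting the definition of $c_{\chi}$ and interchanging the two finite sums this equals
\[
\frac{1}{\#G} \sum_{g \in G} c_g \sum_{\chi \in G^{\vee}} \chi(h g^{-1}).
\]

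The crux is the orthogonality relation for characters: for $a \in G$ one has $\sum_{\chi \in G^{\vee}} \chi(a) = \#G$ if $a$ is the neutral element $e$ of $G$, and $0$ otherwise. I would deduce this from the classical facts that $\#G^{\vee} = \#G$ and that $G^{\vee}$ separates the points of $G$: if $a \neq e$, choose $\psi \in G^{\vee}$ with $\psi(a) \neq 1$; since multiplication by $\psi$ permutes $G^{\vee}$ we get $\psi(a) \sum_{\chi} \chi(a) = \sum_{\chi} \chi(a)$, forcing the sum to vanish. Applying this with $a = h g^{-1}$ annihilates every term except $g = h$, so the coefficient of $h$ on the right-hand side collapses to $\frac{1}{\#G} \cdot \#G \cdot c_h = c_h$, matching the left-hand side. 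As $h \in G$ was arbitrary, the two elements of $\C[G]$ coincide.

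I do not expect a genuine obstacle here: the only non-formal input is the orthogonality of characters of a finite abelian group, which is standard, and everything else is bookkeeping with finite sums and an interchange of summation. If one preferred to avoid quoting $\#G^{\vee} = \#G$ and point-separation directly, one could instead establish the identity on a cyclic group first and then extend it multiplicatively across a decomposition $G \cong \prod_i \Z/n_i\Z$, or simply cite the theory of Fourier analysis on finite abelian groups.
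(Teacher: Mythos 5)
Your proof is correct and is exactly the standard Fourier-inversion argument (coefficient comparison plus orthogonality of characters) that the paper invokes when it dismisses the statement as ``a well-known fact\ldots a Fourier transform''; you have simply supplied the details the paper omits, including the correct reading $c_{\chi}=f_{\chi}$. No discrepancy with the paper's approach, so nothing further is needed.
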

\begin{proof}
This is a well-known fact and can be seen as a Fourier transform.
\end{proof}

For a subset $S \subseteq G$ we set \index{$\C[S]$} $\C[S]=\{\sum_{s \in S} c_s s: c_s \in \C\} \subseteq \C[G]$, which is a $\C$-vector space. Let
$\chi_0$ be the identity element of $G^{\vee}$. We define the shape parameter of $S$, which we denote by $\sh_G(S)$, as follows:
\begin{eqnarray*}
 \sh_G(S)=\frac{\#S}{\#G} \cdot \inf_{f \in \C[S]: f_{\chi_0} \neq 0} \frac{ \sum_{\chi \in G^{\vee}} |f_{\chi}|}{|f_{\chi_0}|}.
\end{eqnarray*}
The following proposition gives some basic properties.

\begin{proposition} \label{3cma}
Let $S \subseteq G$ be non-empty. Then the following hold:
 \begin{enumerate}
  \item For $\alpha \in \Aut(G)$ and $b \in G$ we have $\sh(b \cdot \alpha(S))=\sh(S)$. 
 
  \item We have $1 \leq \sh(S) \leq \#S$. Furthermore we have $\sh(S)=1$ if and only if $S$ is a coset of a subgroup of $G$. We have $\sh(S)=\#S$ if and only if $\#S=1$. 
  
  \item For $S \subseteq S'$ we have $\sh(S') \leq \frac{\#S'}{\#S} \sh(S)$. 
\end{enumerate}
Let $G'$ be a finite abelian group and let $S' \subseteq G'$ be non-empty. Then the following hold:
\begin{enumerate} \setcounter{enumi}{3}
  \item Let $i: G \to G'$ be an injective group morphism. Then one has $\sh_G(S)=\sh_{G'}(i(S))$.
  
  \item Let $\pi: G \to G'$ be a surjective morphism of groups. Then the equality $\sh_G(\pi^{-1}(S'))=\sh_{G'}(S')$ holds. 
  \item We have $\sh_{G \times G'}(S \times S') \leq \sh_G(S) \times \sh_{G'}(S')$. 

 \end{enumerate}
\end{proposition}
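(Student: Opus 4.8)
The plan is to reduce everything to the discrete Fourier transform $f\mapsto(f_\chi)_{\chi\in G^\vee}$ of Proposition \ref{45}. For nonempty $S$ write
$r(S)=\inf\{\sum_{\chi}|f_\chi|/|f_{\chi_0}|:f\in\C[S],\ f_{\chi_0}\neq0\}$, so that $\sh_G(S)=\tfrac{\#S}{\#G}\,r(S)$, and all six assertions become statements about $r$. The computation underlying the functoriality is this: if $f=\sum_{s\in S}c_s s$ and one replaces $f$ by $\sum_s c_s\,b\alpha(s)$ for $b\in G$ and $\alpha\in\Aut(G)$, then $f_\chi$ becomes $\chi(b^{-1})f_{\chi\circ\alpha}$; since $|\chi(b^{-1})|=1$, $\chi_0\circ\alpha=\chi_0$, and $\chi\mapsto\chi\circ\alpha$ permutes $G^\vee$, the ratio $\sum_\chi|f_\chi|/|f_{\chi_0}|$ is unchanged. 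This gives (1).

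Items (3)--(6) are then formal. For (3), $\C[S]\subseteq\C[S']$ forces $r(S')\le r(S)$, so $\sh(S')=\tfrac{\#S'}{\#G}r(S')\le\tfrac{\#S'}{\#S}\sh(S)$. For (4), precomposition with $i$ maps $(G')^\vee$ onto $G^\vee$ with every fibre of size $\#G'/\#G$, and $(i_*f)_{\chi'}=f_{\chi'\circ i}$; hence $r_{G'}(i(S))=\tfrac{\#G'}{\#G}r_G(S)$, and the normalising factors $\tfrac{\#S}{\#G'}$ versus $\tfrac{\#S}{\#G}$ absorb the discrepancy. For (5), put $N=\ker\pi$; the partial sum of $f_\chi$ over any $\pi$-fibre vanishes unless $\chi|_N$ is trivial, so pulling back an $f'\in\C[S']$ to the constant-on-fibres element of $\C[\pi^{-1}(S')]$ multiplies both $\sum_\chi|f_\chi|$ and $|f_{\chi_0}|$ by $\#N$, while averaging an arbitrary $f\in\C[\pi^{-1}(S')]$ over $N$ only deletes the terms with $\chi|_N$ nontrivial, hence does not increase the ratio and yields such a pullback; thus $r(\pi^{-1}(S'))=r(S')$ and $\tfrac{\#\pi^{-1}(S')}{\#G}=\tfrac{\#S'}{\#G'}$. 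For (6), the tensor $f\otimes f'\in\C[S\times S']$ satisfies $(f\otimes f')_{\chi\times\chi'}=f_\chi f'_{\chi'}$, so its ratio is the product of those of $f$ and $f'$; taking infima over such tensors gives $r(S\times S')\le r(S)\,r(S')$, hence the inequality.

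For (2), Fourier inversion gives $|c_s|\le\tfrac{1}{\#G}\sum_\chi|f_\chi|$, so $|f_{\chi_0}|=|\sum_s c_s|\le\tfrac{\#S}{\#G}\sum_\chi|f_\chi|$, whence $r(S)\ge\#G/\#S$ and $\sh_G(S)\ge1$. Taking $f$ a single basis vector $s_0$ gives $r(S)\le\#G$, so $\sh_G(S)\le\#S$, with equality when $\#S=1$ as then no other $f$ exists. If $\#S\ge2$, pick $s_1\neq s_2$ in $S$; by (1), $\sh_G(\{s_1,s_2\})=\sh_G(\{e,t\})$ with $t=s_1^{-1}s_2\neq e$, the test function $e+t$ gives $\sh_G(\{e,t\})\le\tfrac{1}{\#G}\sum_\chi|1+\chi(t)|<2$ (since $|1+\chi(t)|\le2$ with equality only when $\chi(t)=1$, and some character separates $t$ from $e$), and then (3) gives $\sh_G(S)\le\tfrac{\#S}{2}\sh_G(\{e,t\})<\#S$. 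Finally, if $S=bH$ with $H\le G$, then $\sh_G(S)=\sh_G(H)$ by (1), and the indicator $f=\sum_{h\in H}h$ has $f_\chi=\#H$ or $0$ according as $\chi|_H$ is trivial or not, so its ratio is $\#G/\#H$ and $\sh_G(H)\le1$, forcing equality.

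The one delicate point is the converse in (2): $\sh_G(S)=1\Rightarrow S$ is a coset. I first show the infimum is attained: on the slice $\{f_{\chi_0}=1\}$, Parseval gives $(\sum_\chi|f_\chi|)^2\ge\sum_\chi|f_\chi|^2=\#G\sum_s|c_s|^2$, so the sublevel sets of $f\mapsto\sum_\chi|f_\chi|$ are compact and a minimiser $f$ exists with $\sum_\chi|f_\chi|=\#G/\#S$. Then the chain $1=|f_{\chi_0}|\le\sum_s|c_s|\le\#S\max_s|c_s|\le\tfrac{\#S}{\#G}\sum_\chi|f_\chi|=1$ is all equalities, which forces the $c_s$ to be equal in modulus and in argument; after scaling, $f$ is $\tfrac{1}{\#S}$ times the indicator of $S$. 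Writing $a_\chi=|\widehat{\mathbf 1_S}(\chi)|$, this says $\sum_\chi a_\chi=\#G$, while $\max_\chi a_\chi\le\#S$ and Parseval gives $\sum_\chi a_\chi^2=\#G\,\#S$; comparing $\#G=\sum_\chi a_\chi\ge\sum_\chi a_\chi^2/\max_\chi a_\chi\ge\#G$ forces $a_\chi\in\{0,\#S\}$ for every $\chi$. For the $\#G/\#S$ characters with $a_\chi=\#S$ (the count being fixed by Parseval), equality in the triangle inequality makes $\chi$ constant on $S$; translating so $e\in S$ and setting $H=\langle S\rangle$, these characters all annihilate $H$, a set of size $\#G/\#H$, so $\#H\le\#S$, and since $S\subseteq H$ we get $S=H$. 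I expect this equality‑case analysis — establishing attainment and then squeezing all the inequalities simultaneously to recover the indicator and the subgroup — to be the main obstacle; the remainder is bookkeeping with the Fourier transform.
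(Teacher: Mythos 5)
Your proof is correct, and since the paper itself gives no argument for Proposition \ref{3cma} (it declares the parts elementary and defers to \cite{KO6}), your Fourier-analytic treatment supplies exactly the verification the paper leaves to the reader: the computations behind (i) and (iii)--(vi) are the expected character manipulations (translation/automorphism action on $f_\chi$, restriction of characters along an injection with fibres of size $\#G'/\#G$, averaging over $\ker\pi$, and the tensor construction for products), and they are all carried out correctly, including the normalising factors $\#S/\#G$. The one genuinely delicate point, the implication $\sh(S)=1\Rightarrow S$ is a coset, is also handled soundly: attainment of the infimum via Parseval-compactness, the equality chain forcing the minimiser to be $\tfrac{1}{\#S}$ times the indicator of $S$, the comparison $\#G=\sum_\chi a_\chi\geq\sum_\chi a_\chi^2/\max_\chi a_\chi$ forcing $a_\chi\in\{0,\#S\}$, and the triangle-inequality equality making the $\#G/\#S$ surviving characters trivial on $\langle S\rangle$, whence $S=\langle S\rangle$ after translation. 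No gaps.
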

\begin{proof}
Most parts in this proof are elementary and left to the reader (see \cite{KO6}).
\end{proof}

If $S \subseteq G$ is non-empty, we set
\begin{eqnarray*}
SS^{-1}=\{st^{-1}:\ s, t \in S\}.
\end{eqnarray*}

\begin{lemma} \label{3c6}
 We have \[ \sh(S S^{-1}) \leq \frac{\#(S S^{-1})}{\#S}. \]
\end{lemma}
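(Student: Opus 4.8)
The plan is to produce, directly from the definition of $\sh_G$, an explicit element of $\C[SS^{-1}]$ that witnesses the claimed bound. Write $T=SS^{-1}$. The natural test element is built from the indicator of $S$: set $g=\sum_{s\in S}s$ and $\tilde g=\sum_{s\in S}s^{-1}$ in $\C[G]$, and put $f=g\tilde g$. Expanding the product gives $f=\sum_{s,t\in S}st^{-1}$, so that $f\in\C[T]$, which is exactly the space over which the infimum in $\sh_G(T)$ ranges.

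Next I would use that $h\mapsto h_\chi$ is a ring homomorphism $\C[G]\to\C$, which is immediate from the formula $h_\chi=\sum_{g}c_g\chi(g^{-1})$ and the multiplicativity of $\chi$. Since $g_\chi=\sum_{s\in S}\chi(s^{-1})=\overline{\sum_{s\in S}\chi(s)}$ and $\tilde g_\chi=\sum_{s\in S}\chi(s)$, we get
\begin{eqnarray*}
f_\chi=g_\chi\,\tilde g_\chi=\Bigl|\sum_{s\in S}\chi(s)\Bigr|^2\ \geq\ 0
\end{eqnarray*}
for every $\chi\in G^\vee$. In particular $f_{\chi_0}=(\#S)^2\neq 0$, so $f$ is an admissible test element.

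Because each $f_\chi$ is a nonnegative real, $\sum_{\chi\in G^\vee}|f_\chi|=\sum_{\chi\in G^\vee}\bigl|\sum_{s\in S}\chi(s)\bigr|^2$; expanding the square and summing over $\chi$ first, the orthogonality relations for characters of $G$ collapse this to $\#G\cdot\#S$ (only the diagonal pairs $s=t$ survive). Hence
\begin{eqnarray*}
\frac{\sum_{\chi\in G^\vee}|f_\chi|}{|f_{\chi_0}|}=\frac{\#G\cdot\#S}{(\#S)^2}=\frac{\#G}{\#S},
\end{eqnarray*}
and plugging this single $f$ into $\sh_G(T)=\frac{\#T}{\#G}\inf_{h}\frac{\sum_\chi|h_\chi|}{|h_{\chi_0}|}$ yields $\sh(SS^{-1})\leq\frac{\#T}{\#G}\cdot\frac{\#G}{\#S}=\frac{\#(SS^{-1})}{\#S}$, as desired.

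There is no serious obstacle here: the whole content lies in the choice $f=g\tilde g$ and in recognizing its character coordinates as $\bigl|\sum_{s\in S}\chi(s)\bigr|^2$, after which Plancherel/orthogonality finishes the computation. The only point that needs a moment's attention is observing that $f_\chi\geq 0$, so that the sum of absolute values in the definition of the shape parameter is genuinely equal to the Parseval sum and not merely bounded by something larger.
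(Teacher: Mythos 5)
Your proof is correct and is exactly the paper's argument: the paper's one-line proof also takes the test element $\left(\sum_{s \in S} s\right)\cdot\left(\sum_{s \in S} s^{-1}\right)$ supported in $SS^{-1}$, and your computation of its character coordinates $\bigl|\sum_{s\in S}\chi(s)\bigr|^2$ together with orthogonality just fills in the details the paper leaves to the reader.
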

\begin{proof}
The function $\left(\sum_{s \in S} s \right) \cdot \left(\sum_{s \in S} s^{-1} \right)$ with support in $SS^{-1}$ gives the upper bound.
\end{proof}

\begin{definition} \label{3c540}
An \index{interval}\emph{interval} of $\Z$ is a non-empty set $S \subseteq \Z$ such that are $n,m \in \R$ with $S=[n,m] \cap \Z$. 

Let $G=\Z/n\Z$. A \emph{standard interval}\index{standard interval}\index{interval!standard} of $G$ is defined to be the image of an interval
of $\Z$ under the natural map $\Z \to \Z/n\Z$. 

Let $G$ be a finite abelian group. A subset $S \subseteq G$ is called a \emph{full interval}\index{full interval}\index{interval!full} if there exist
$n \in \Z_{\geq 1}$, a surjective
morphism
$\pi: G \to \Z/n\Z$ and a standard interval $T$ of $\Z/n\Z$ such that $\pi^{-1}(T)=S$. A full interval of a subgroup of $G$ is called an
\emph{interval}\index{interval} of $G$.
\end{definition}

\begin{lemma} \label{3c99}
For an interval $S \subseteq G$ we have $\sh(S) \leq 2$.
\end{lemma}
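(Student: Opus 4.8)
The plan is to reduce the statement to the case of a standard interval of a cyclic group, and then to realize such an interval (up to one element) as a difference set, so that Lemma~\ref{3c6} can be applied.

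First I would reduce to the cyclic case. By Definition~\ref{3c540}, an interval $S$ of $G$ is a full interval of some subgroup $H\leq G$; thus there are a surjection $\pi\colon H\to\Z/n\Z$ and a standard interval $T$ of $\Z/n\Z$ with $\pi^{-1}(T)=S$. Applying Proposition~\ref{3cma}(4) to the inclusion $H\hookrightarrow G$ and Proposition~\ref{3cma}(5) to $\pi$, one gets
\[
\sh_G(S)=\sh_H(S)=\sh_H(\pi^{-1}(T))=\sh_{\Z/n\Z}(T),
\]
so it suffices to bound $\sh_{\Z/n\Z}(T)$ by $2$ for every standard interval $T\subseteq\Z/n\Z$.

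Next I would treat the cyclic case. If $T=\Z/n\Z$, it is a coset of a subgroup, so $\sh(T)=1$ by Proposition~\ref{3cma}(2). Otherwise $\#T=r$ with $1\leq r\leq n-1$, and after translating (which leaves $\sh$ unchanged, by Proposition~\ref{3cma}(1)) I may assume $T=\{0,1,\dots,r-1\}\bmod n$. Set $m=\lceil r/2\rceil$, so that $2m-1\leq r\leq n-1$ and $m\geq r/2$, and put $S_0=\{0,1,\dots,m-1\}\bmod n$. Since $2m-1<n$, the classes $-(m-1),\dots,m-1$ are pairwise distinct modulo $n$, so $S_0S_0^{-1}$ is a standard interval with exactly $2m-1$ elements, and Lemma~\ref{3c6} gives $\sh(S_0S_0^{-1})\leq\frac{2m-1}{m}$. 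I would then split on the parity of $r$: if $r=2m-1$, then $S_0S_0^{-1}$ is a translate of $T$, hence $\sh(T)=\sh(S_0S_0^{-1})\leq\frac{2m-1}{m}<2$; if $r=2m$, then $T$ contains a translate of $S_0S_0^{-1}$, and Proposition~\ref{3cma}(3) and~(1) give
\[
\sh(T)\leq\frac{\#T}{\#(S_0S_0^{-1})}\,\sh(S_0S_0^{-1})\leq\frac{2m}{2m-1}\cdot\frac{2m-1}{m}=2.
\]

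I do not expect a genuine obstacle here: the whole argument rests on Lemma~\ref{3c6} --- equivalently, on the test function $\bigl(\sum_{s\in S_0}s\bigr)\bigl(\sum_{s\in S_0}s^{-1}\bigr)$, whose Fourier coefficients are the nonnegative numbers $|{\cdot}|^2$ --- together with the formal properties of $\sh$ collected in Proposition~\ref{3cma}. The only points that need care are that $S_0S_0^{-1}$ has exactly $2m-1$ elements (which forces the preliminary reduction to $r<n$, where there is no wraparound in $\Z/n\Z$) and the parity bookkeeping: the difference set of an interval always has odd cardinality, so for even $r$ one cannot realize $T$ itself as a difference set and instead uses $S_0S_0^{-1}\subseteq T$ together with the monotonicity estimate, at the cost of the harmless factor $\tfrac{2m}{2m-1}$.
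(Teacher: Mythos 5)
Your proof is correct and follows essentially the same route as the paper: reduce via Proposition \ref{3cma}(iv)--(v) to a standard interval in $\Z/n\Z$, realize an interval of the right (odd) size as a difference set $S_0S_0^{-1}$, and apply Lemma \ref{3c6}, with the even case handled by the monotonicity estimate of Proposition \ref{3cma}(iii) — precisely the "similar" argument the paper leaves implicit.
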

\begin{proof}
Using Proposition \ref{3cma}iv and v, we reduce to the case where $G=\Z/n\Z$, for which we use additive notation, and where $S$ is a standard interval. First of all assume
that the size of $S$ is odd, then we may assume (after shifting) $S=\{-\overline{m},-\overline{m}+1,\ldots,0,\ldots,\overline{m}-1,\overline{m}\}$
for some $m \in \Z_{\geq 0}$ with $m \leq \frac{n-1}{2}$. Let $T=\{0,1,\ldots,\overline{m}\}$. Then $T-T=S$
and hence we find by Lemma
\ref{3c6}
\begin{eqnarray*}
\sh(S) \leq \frac{\#S}{\#T}=\frac{2m+1}{m+1}<2.
\end{eqnarray*}
The proof in the case $\#S$ is even is similar.
\end{proof}

\begin{lemma} \label{3c900}
 Let $V$ be a vector space over $\F_p$ of dimension $n$. Let $c \in \{1, \ldots,p-1\}$ and $0 \leq i < n$ or $(c,i)=(1,n)$. Then there is an
interval $S$ in $V$ with $\#S=c p^i$.
\end{lemma}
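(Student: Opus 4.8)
The plan is to unwind Definition \ref{3c540} and then explicitly exhibit a subspace of $V$ together with a standard interval of $\Z/p\Z$ whose preimage has the desired size. Recall that an interval of $V$ is, by definition, a full interval of some subgroup $H$ of $V$, and that since $V$ is an $\F_p$-vector space every subgroup of $V$ is a subspace. So it suffices to produce a subspace $H \leq V$, a surjective homomorphism $\pi \colon H \to \Z/m\Z$, and a standard interval $T$ of $\Z/m\Z$ with $\#\pi^{-1}(T) = cp^i$.

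First I would treat the generic case $c \in \{1,\dots,p-1\}$ and $0 \le i < n$. Since $i+1 \le n$, I would choose a subspace $H \leq V$ with $\dim_{\F_p} H = i+1$ and let $\pi \colon H \to \F_p = \Z/p\Z$ be any nonzero $\F_p$-linear functional; it is surjective and its kernel has dimension $i$, so $\#\ker\pi = p^i$. I would then let $T$ be the image in $\Z/p\Z$ of the interval $\{0,1,\dots,c-1\} \subseteq \Z$; because $c \le p-1$, the reduction map $\Z \to \Z/p\Z$ is injective on $\{0,\dots,c-1\}$, so $T$ is a standard interval with $\#T = c$. Then $S = \pi^{-1}(T)$ is a full interval of $H$, hence an interval of $V$, and $\#S = \#T \cdot \#\ker\pi = cp^i$, as required.

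For the remaining case $(c,i) = (1,n)$ I would simply take $S = V$: it is the preimage of the (unique) standard interval of $\Z/1\Z$ under the trivial homomorphism $V \to \Z/1\Z$, hence a full interval of $V$, and $\#S = p^n = 1 \cdot p^n$.

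There is no real obstacle here; the statement is essentially a bookkeeping check against the definition. The only places the hypotheses enter are: the requirement $c \le p-1$, which is exactly what guarantees that $\{0,\dots,c-1\}$ does not wrap around in $\Z/p\Z$ and so $T$ has exactly $c$ elements; and the requirement $i+1 \le n$ for a subspace $H$ of dimension $i+1$ to exist, which is why the case $c \ge 2$ needs $i < n$ while $c = 1$ can additionally afford $i = n$.
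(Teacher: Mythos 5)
Your proposal is correct and matches the paper's argument: the paper likewise takes a subspace $W$ of dimension $i+1$, a nonzero functional $f\in\Hom(W,\F_p)$, and pulls back an interval of $\F_p$ of length $c$, handling $(c,i)=(1,n)$ as the trivial case $S=V$. Your write-up just makes the bookkeeping with Definition \ref{3c540} more explicit.
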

\begin{proof}
 If $(c,i)=(1,n)$, the statement is obviously true. Assume $c \neq 0$ and let $W$ be a subspace of dimension $i+1$ of $V$ and consider a
nonzero map $f \in
W^{\vee}=\Hom(W,\F_p)$. Pick an interval $S_0$ of $\F_p$ of length $c$ and set $S=f^{-1}(S_0)$. 
\end{proof}

\section{Applications of the shape parameter to hyperelliptic curves}

\subsection{Main statements}

\begin{conv}
In this article we assume that a hyperelliptic curve $C$ of genus $g$ is given by an equation $y^2+h(x)y=f(x)$ as in Subsection \ref{t1}. Furthermore, we assume that $\infty$ is ramified in the extension $k(C)/k(x)$. This is equivalent to:
\begin{itemize}
\item $\chart(k) \neq 2$: $\deg(f)=2g+1$;
\item $\chart(k)=2$: $1 \leq \deg(h) \leq g$.
\end{itemize}
We let $\infty'$ be the point above $\infty$ of $k(C)$.
\end{conv}

Let $k$ be a finite field of cardinality $q$ and characteristic $p$. Let $C$ be a hyperelliptic curve over $k$ given by a pair $(f,h)$ following our conventions above. Then we have an injective map
\begin{eqnarray*}
\varphi_C: C(k) &\to& \Pic_k^0(C) \\
P &\mapsto& [P]-[\infty'].
\end{eqnarray*}
Let $C_S=\{P \in C(k): x(P)
\in S\}$\index{$C_S$}. We will give conditions on $\# S$ and $\sh_{k^+}(S)$ such that $\Pic^0_k(C)=\langle \varphi_C(C_S) \rangle$.

 Let $P \neq \infty$ be a prime of $k(x)$, the function field of the projective line over $k$, corresponding to the monic polynomial $\sum_{i=0}^n a_i x^i$ with $a_n=1$.
We put $\Tv(P)=-a_{n-1} \in k$. 

\begin{proposition} \label{4c923}
 Assume that $p=2$ and that $\deg(h)=g$. Then we have a surjective morphism of
groups
\begin{eqnarray*}
\psi_C: \Pic^0_k(C) \to \F_2
\end{eqnarray*}
defined as follows:
Let $P \neq \infty'$ be a prime of $k(C)$. Then we have:
\begin{eqnarray*}
\psi_C([P]-\deg_k(P)[\infty']) = \tr_{k/\F_2}\left( \frac{\fv(P/P|_K)\Tv(P|_K)d_1+\deg_k(P)d_0}{h_g^2} \right) \in \F_2.
\end{eqnarray*}
\end{proposition}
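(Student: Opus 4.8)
The plan is to realize $\psi_C$ as the "trace of the local contributions at $\infty$" of an Artin--Schreier extension, or more concretely, as the composite of the canonical map $\Pic^0_k(C)\to\Pic^0_k(C)/2\Pic^0_k(C)$ with a character of order $2$ coming from an unramified quadratic extension of $k(C)$ in characteristic $2$. In characteristic $2$ with $\deg(h)=g$, the place $\infty$ of $k(x)$ is ramified in $k(C)/k(x)$, and the local analysis at $\infty'$ (using the model $V'$ from Subsection~\ref{t1}, where $h_\infty(x')=h(1/x')x'^{g+1}$ satisfies $h_\infty(0)=h_g\neq 0$) shows that $\infty'$ is exactly the ramified place; since $\deg(h)=g$, one checks $\infty'$ is a wildly ramified place with a specific different exponent. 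First I would write down, for a point $P\in C(k)$ with $x(P)=s$ a root of the monic polynomial $x-s$ (so $\Tv(P|_K)=s$), the local expansion of $y$ at $\infty'$ in terms of a uniformizer, and read off the Artin--Schreier class; this gives a well-defined homomorphism. The cleaner route, which I expect the author takes, is to produce directly a function $u\in k(C)$ (built from $y$, $x$, and the coefficients of $f,h$, with denominators only powers of $h_g$) such that $u^2+u$ has divisor supported only away from the affine points and whose residues at $\infty'$ encode precisely the formula in the statement; then $\psi_C$ is "evaluation of the Artin--Schreier symbol $\tr_{k/\F_2}(\mathrm{res}_{\infty'}(u\,d\pi/\pi))$" type expression, but packaged via the global reciprocity / sum-of-local-symbols $=0$ identity so that the value on a degree-$0$ divisor $[P]-\deg_k(P)[\infty']$ becomes the sum over the affine places in the divisor.

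The key steps, in order: (1) Fix the explicit smooth model and compute the local ring at $\infty'$, identifying a uniformizer $\pi$ and expressing $x,y$ as Laurent series in $\pi$ over the residue field (which is $k$ since $\infty'$ has degree $1$ — this uses $\deg(h)=g$ together with the genus-$g$ assumption). (2) Exhibit the candidate: for a monic prime $P|_K=\prod(x-\alpha_i)\in k[x]$ lying below $P\in\mathcal P_{k(C)/k}$, the quantity $\tr_{k/\F_2}\!\big((\fv(P/P|_K)\Tv(P|_K)d_1+\deg_k(P)d_0)/h_g^2\big)$ for appropriate constants $d_0,d_1$ depending only on $(f,h)$; I would determine $d_0,d_1$ by matching against the Artin--Schreier class of $y$ (so $d_1$ comes from the coefficient $h_g$ and the "linear in $x$" part of the defining equation near $\infty$, and $d_0$ from the constant term, i.e. from $f_{2g+1}$ and $h_g$). (3) Show additivity: because $\psi_C$ is defined on primes and extended $\Z$-linearly, additivity is automatic on $\di^0_k(C)$; the real content is that it kills principal divisors $(v)$ for $v\in k(C)^*$. (4) Show it kills principal divisors by identifying the map with $D\mapsto \mathrm{inv}_{\infty'}$ of a global reciprocity law, or equivalently by showing $\psi_C([P]-\deg_k(P)[\infty'])=(P, N/k(C))$ under $\Gal(N/k(C))\cong\F_2$ for the everywhere-unramified-away-from-nothing... — more precisely, $N$ is the quadratic subextension of $k(C)_{[{\infty'}]}$ (using Proposition~\ref{2c843}) cut out by the order-$2$ character, and one checks the Frobenius formula matches the trace expression via Lemma~\ref{2c477}-style conductor bookkeeping at $\infty'$; then $\psi_C$ factors through $\Pic^0_k(C)$ because Artin reciprocity does. (5) Surjectivity: since $\#C(k)>0$ and the curve genuinely has $\deg(h)=g$, one checks the target is hit — e.g. the class $[\infty'']-[\infty']$ for a second point, or more robustly, note $N/k(C)$ is a nontrivial (hence degree exactly $2$) geometric extension, so the Frobenius map is onto $\F_2$.

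The main obstacle I expect is step (4): verifying that the explicit trace formula is genuinely the Artin symbol and in particular that it is insensitive to the choice of representative divisor in a class. This requires a careful local computation at $\infty'$ — getting the different/conductor exponent right, choosing a uniformizer compatibly with the model $V'$, and confirming that the residue pairing $\mathrm{res}_{\infty'}(\cdot)$ applied to the logarithmic differential of $(x-\alpha)$ against the Artin--Schreier class of $y$ yields exactly $\Tv(P|_K)d_1/h_g^2$ modulo $\wp(k)=\{a^2+a\}$, with the $\deg_k(P)d_0/h_g^2$ term being the contribution of the "base point" $\infty'$ itself. A secondary subtlety is the passage from a prime $P$ of higher degree to its norm: the factor $\fv(P/P|_K)$ in the formula is precisely the ramification index appearing when one pushes the local symbol at $P$ down to $P|_K$, and I would justify it using the compatibility $(D',LM/L)|_M=(\Norm_{L/K}(D'),M/K)$ quoted in the Class field theory subsection. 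Everything else — well-definedness as a homomorphism on divisors, the explicit shape of $d_0,d_1$, and surjectivity — should be routine once the local picture at $\infty'$ is pinned down.
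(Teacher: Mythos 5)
Your proposal is correct in outline and uses the same essential ingredients as the paper: an explicit Artin--Schreier function exhibiting an everywhere-unramified quadratic extension $L$ of $k(C)$ (this is exactly Lemma \ref{4c438}: $L$ is pulled back from $z^2-z=d_1x/h_g^2$ over $k(x)$, and the substitution $y''=y'/(h_gx^g)$ makes the class regular at $\infty'$ with constant term $d_0/h_g^2$), the Frobenius computation over $k(x)$ (Proposition \ref{4c301}), and the norm compatibility which is what produces the factor $\fv(P/P|_K)$. Where you differ is the global packaging. You define $\psi_C$ on degree-zero divisors as the Artin symbol (equivalently, a sum of Schmid local symbols) of $L$ and invoke reciprocity to kill principal divisors, so the term $\deg_k(P)d_0$ is simply the local contribution at $\infty'$ and no case distinction is needed; surjectivity then requires knowing $L/k(C)$ is geometric, which follows from the paper's proof that $k(C)_+/k(x)$ is geometric. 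The paper instead insists on realizing $\psi_C$ as a quotient of $\Gal(k(C)_{[\infty']}/k(C))\cong\Pic^0_k(C)$ via Proposition \ref{2c843}, which forces a dichotomy: if $\tr_{k/\F_2}(d_0/h_g^2)=0$ then $\infty'$ splits in $L$, so $L\subseteq k(C)_{[\infty']}$ and the formula is a direct Frobenius computation; if $\tr_{k/\F_2}(d_0/h_g^2)=1$ then $L\not\subseteq k(C)_{[\infty']}$ and the paper replaces $L$ by the third quadratic extension $L'$ inside $Lk'$ (with $k'$ the quadratic constant field extension), the parity of $\deg_k(P)$ mediating between splitting in $L$ and in $L'$ --- that twist is exactly where the $\deg_k(P)d_0$ term comes from in the paper's version. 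Your route buys uniformity; the paper's route buys the explicit relation to $k(C)_{[\infty']}$, which it needs anyway to describe $\Gal(k(C)_{+,[\infty']}/k(C))$ in the sequel. One caution: your fallback formulation ``$N$ is the quadratic subextension of $k(C)_{[\infty']}$ cut out by the order-$2$ character'' is either circular (that subextension is determined by the very homomorphism being constructed) or, if taken to mean $N=L$, false precisely when $\tr_{k/\F_2}(d_0/h_g^2)=1$; if you argue that way you must perform the constant-field twist as the paper does, so your reciprocity route is the one to carry through.
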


\begin{theorem}\label{4c911}
Let $C$ over $k$ be a hyperelliptic curve of genus $g$ given according to our assumptions as above such that $\#C(k)>(2g-2)\sqrt{q}$. Put $s=2$ if
$p=2$ and $s=3$ if $p \neq 2$. Let $S \subseteq k^+$ such that
\begin{eqnarray*}
q^{3/2} \cdot 2(2g-2+s) \cdot \sh_{k}(S)< \left( \#C(k)+(2g-2+2s)\sqrt{q} \right) \cdot \#S.
\end{eqnarray*}

Then the following hold:
\begin{enumerate}
 \item 
Assume that $p \neq 2$ or $p=2$ and $\deg(h)<g$. Then we have $\langle \varphi_C(C_S) \rangle=\Pic^0_k(C)$. 
 \item
Assume that $p=2$ and $\deg(h)=g$. Define the following:
\begin{eqnarray*}
d_i &=& f_{2g+i}+\sqrt{f_{2g+2}}h_{g-1+i} \in k \ (\textrm{for }i \in \{0,1\}) \\
\epsilon_C &=& (-1)^{\tr_{k/\F_2}(d_0/h_g^2)} \in \C \\
 \lambda_2 &\in& \Hom(k^+,\C^*),\ c \mapsto (-1)^{\tr_{k/\F_2}(cd_1/h_g^2)} \\
H_C &=& \{x \in k: \lambda_2(x)=-\epsilon_C\} \subseteq k.
\end{eqnarray*}
Then we have:
\begin{enumerate}
 \item $\langle \varphi_C(C_S) \rangle \in \{\Pic_k^0(C),\ker(\psi_C)\}$;
 \item if $S \cap H_C= \emptyset$, then $\langle \varphi_C(C_S) \rangle=\ker(\psi_C)$;
 \item if $S \cap H_C \neq \emptyset$, then $\langle \varphi_C(C_S) \rangle=\Pic_k^0(C)$ if
\begin{eqnarray*}
q^{3/2}(2g-2+s)\sh_k(S \cap H_C)< \left( \#C(k)+(2g-2+2s) \sqrt{q}  \right) \cdot \# (S \cap H_C).
\end{eqnarray*}
\end{enumerate}
\end{enumerate}
\end{theorem}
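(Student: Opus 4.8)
The plan is to reduce the assertion to the non-vanishing of certain character sums on $G=k^+\times\Pic^0_k(C)$, to estimate those sums by class field theory after computing conductors, and to treat the case $p=2$, $\deg(h)=g$ by isolating the single ``bad'' character attached to $\psi_C$.

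\emph{Reduction to character sums.} Note that $\langle\varphi_C(C_S)\rangle=\Pic^0_k(C)$ if and only if no nontrivial $\rho\in\Hom(\Pic^0_k(C),\C^*)$ is trivial on $\varphi_C(C_S)$. Given $f=\sum_{s\in S}c_s s\in\C[S]$ with $f_{\chi_0}\neq 0$ (extend $c$ by $0$ off $S$), set $\Sigma_{f,\rho}=\sum_{P\in C(k)\setminus\{\infty'\}}\overline{c_{x(P)}}\,(1-\rho(\varphi_C(P)))$. If $\rho$ is trivial on $\varphi_C(C_S)$ then each summand vanishes (either $\rho(\varphi_C(P))=1$ or $c_{x(P)}=0$), so $\Sigma_{f,\rho}=0$; hence it suffices to produce one $f$ — namely one nearly attaining the infimum defining $\sh_k(S)$ — with $\Sigma_{f,\rho}\neq 0$ for every $\rho\neq 1$. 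Fourier inversion on $k^+$ (Proposition \ref{45}) gives
\[
q\,\Sigma_{f,\rho}=\sum_{\lambda\in(k^+)^\vee}\overline{f_{\overline\lambda}}\,\bigl(\Sigma^{0}_{(\lambda,1)}-\Sigma^{0}_{(\lambda,\rho)}\bigr),\qquad \Sigma^{0}_{(\lambda,\rho')}:=\sum_{P\in C(k)\setminus\{\infty'\}}\lambda(x(P))\,\rho'(\varphi_C(P)),
\]
and $\Sigma^{0}_{(\lambda,\rho')}$ is the character sum of the character $(\lambda,\rho')$ of $G$ evaluated at the image of $C(k)\setminus\{\infty'\}$.

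\emph{Construction of $M$.} Realize $G$ over $k(C)$ by $M=M_1M_2$, where $M_1$ is the extension $k(C)_{[D]}$ of Proposition \ref{2c843} with $D=[\infty']$ (unramified, geometric, Galois group $\Pic^0_k(C)$, with $(P,M_1/k(C))=[P]-[\infty']=\varphi_C(P)$), and $M_2=k(C)[z]/(z^q-z-x)$ (ramified only over $\infty'$, Galois group $k^+$ via $\sigma\mapsto\sigma(z)-z$; the residue computation $z^q-z=x\equiv s$ shows $(P,M_2/k(C))=x(P)$ at a rational point with $x(P)=s$). Using the conditions on $(f,h)$ one checks $M_1\cap M_2=k(C)$ (one extension is unramified at $\infty'$, the other ramified there), so $\Gal(M/k(C))=G$ and $(P,M/k(C))=(x(P),\varphi_C(P))$ for $P\in C(k)\setminus\{\infty'\}$; in case (1) one further checks $M/k(C)$ is geometric.

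\emph{Conductors, estimates, assembly.} For $\lambda\neq\chi_0$ the character $(\lambda,1)$ is ramified only at $\infty'$, and there $\deg_k\mathfrak f((\lambda,1))\le s$: over $k(x)$ the Artin--Schreier extension $z^q-z-x$ has conductor $2\infty$ (simple pole of $x$), and one pulls this through the ramified quadratic $k(C)/k(x)$ at $\infty$ — tame for $p$ odd, so the break doubles and the conductor becomes $3$; for $p=2$ the defining equation is reduced directly using $\sqrt{f_{2g+2}}\in k$, giving conductor $\le 2$ — via Lemma \ref{2c477} and the F\"uhrerdiskriminantenproduktformel. By Lemma \ref{2c555}, $\mathfrak f((\lambda,\rho))\le\mathfrak f((\lambda,1))$ since $(\chi_0,\rho)$ is unramified. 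Theorem \ref{2c43}, applied to the relevant subextension (on which the character is injective, and whose only ramified degree-$1$ place is $\infty'$), then yields $|\Sigma^{0}_{(\lambda,1)}|,|\Sigma^{0}_{(\lambda,\rho)}|\le(2g-2+s)\sqrt q$, while for $\lambda=\chi_0$ and $\rho\neq 1$ one has $\Sigma^{0}_{(\chi_0,1)}-\Sigma^{0}_{(\chi_0,\rho)}=\sum_{P\in C(k)}(1-\rho(\varphi_C(P)))$, bounded below in modulus by $\#C(k)-(2g-2)\sqrt q$ (Theorem \ref{2c43} applied to the unramified subextension cut out by $\rho$). Collecting terms,
\[
|q\,\Sigma_{f,\rho}|\ \ge\ |f_{\chi_0}|\bigl(\#C(k)+(2g-2+2s)\sqrt q\bigr)\ -\ 2(2g-2+s)\sqrt q\sum_{\lambda}|f_\lambda|,
\]
and taking $f$ nearly optimal for $\sh_k(S)$, so that $\sum_\lambda|f_\lambda|$ is close to $\frac{q}{\#S}\sh_k(S)\,|f_{\chi_0}|$, makes the right-hand side positive exactly under the hypothesis of the theorem. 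Hence $\Sigma_{f,\rho}\neq 0$ for all $\rho\neq 1$, proving (1).

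\emph{The case $p=2$, $\deg(h)=g$.} Here Proposition \ref{4c923} gives a character $\rho_0:=(-1)^{\psi_C}$ of order $2$ with $\rho_0(\varphi_C(P))=\epsilon_C\lambda_2(x(P))$ for $P\in C(k)\setminus\{\infty'\}$, so the character $\eta:=(\lambda_2,\rho_0)$ of $G$ is constantly $\epsilon_C$ on the image of $C(k)\setminus\{\infty'\}$; correspondingly the analogue of $M$ is not geometric precisely along $\eta$ (its $\eta$-part is a constant-field extension) and Theorem \ref{2c43} is unavailable for $\eta$. However $\eta$ enters $\Sigma_{f,\rho}$ only if $\rho=\rho_0$: the characters appearing are $(\lambda,1)$ and $(\lambda,\rho)$, and $(\lambda,1)=\eta$ is impossible while $(\lambda,\rho)=\eta$ forces $\rho=\rho_0$. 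Thus the argument above goes through for every $\rho\notin\{1,\rho_0\}$, so the annihilator of $\langle\varphi_C(C_S)\rangle$ lies in $\{1,\rho_0\}$, giving (a). For (b): $S\cap H_C=\emptyset$ means $\lambda_2(x(P))=\epsilon_C$, hence $\rho_0(\varphi_C(P))=1$, for all $P\in C_S$, so $\langle\varphi_C(C_S)\rangle\subseteq\ker(\psi_C)$, and with (a) this is equality. For (c): the same Fourier/character-sum method applied to $\#\{P\in C_S:x(P)\in H_C\}$ (now comparing against a single character sum over $k(C)/k(x)$, whence the single factor $2g-2+s$) shows, under the displayed inequality, that this set is nonempty; any such $P$ has $\psi_C(\varphi_C(P))=1$, so $\langle\varphi_C(C_S)\rangle\not\subseteq\ker(\psi_C)$, and (a) forces $\langle\varphi_C(C_S)\rangle=\Pic^0_k(C)$. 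The main obstacle throughout is the precise conductor computation at $\infty'$ — especially the wildly ramified $p=2$ case — together with verifying that $M$ has Galois group $G$ and is geometric off the stated exceptional locus.
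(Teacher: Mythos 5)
Your overall architecture is the same as the paper's: Fourier analysis on $k^+$ via the shape parameter, realization of $k^+\times\Pic^0_k(C)$ by compositing the unramified class field $k(C)_{[\infty']}$ with the Artin--Schreier-type extension $z^q-z=x$, conductor computations at $\infty'$, and the bound of Theorem \ref{2c43}; your part (1), including the collected inequality and the choice of a near-optimal $f$, matches the paper's proof. The gaps are in part (2). First, your claim that ``$M_1\cap M_2=k(C)$ (one extension is unramified at $\infty'$, the other ramified there)'' and that the only problematic character is $\eta=(\lambda_2,\rho_0)$ is not correct in the case $p=2$, $\deg(h)=g$: there $M_2/k(C)$ is \emph{not} totally ramified at $\infty'$ --- it has an unramified quadratic subextension $L$ --- and either $L\subseteq k(C)_{[\infty']}$ (when $\epsilon_C=1$, so $\Gal(M/k(C))$ is the fibered product $k^+\times_{\Gal(L/k(C))}\Pic^0_k(C)$, not $G$), or $M$ contains a quadratic constant-field extension (when $\epsilon_C=-1$). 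In the latter case Theorem \ref{2c43} fails not only for $\eta$ but for every character $\xi=(\lambda_2,\rho)$ with $\rho_0$ an odd power of $\rho$, since the fixed field of $\ker\xi$ then contains the constant-field extension and is not geometric; these characters do occur in your sums as $(\lambda_2,\rho)$ and $(\lambda_2,1)$-type terms, so ``the argument above goes through for every $\rho\notin\{1,\rho_0\}$'' is not justified as stated. The repair is exactly the paper's Lemma \ref{4c1112}(i): the translation relation $\Sigma^0_{\xi\eta}=\epsilon_C\Sigma^0_{\xi}$ (which follows from your observation that $\eta$ is constantly $\epsilon_C$ on Frobenii of rational points) reduces every $\lambda_2$-mode to a $\lambda=\chi_0$-mode, which is handled by the unramified geometric extension; you state the relation but never use it for this purpose.

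Second, your part (c) does not reach the stated constant. A bare nonemptiness count for $C_{S\cap H_C}$ (``comparing against a single character sum, whence the single factor $2g-2+s$'') gives, when $C_{S\cap H_C}=\emptyset$, only $|f_{\chi_0}|\bigl(\#C(k)-1+(2g-2+s)\sqrt q\bigr)\le (2g-2+s)\sqrt q\sum_\lambda|f_\lambda|$, i.e.\ a contradiction only under a hypothesis with main term $\#C(k)-1+(2g-2+s)\sqrt q$, which is strictly weaker than the theorem's $\#C(k)+(2g-2+2s)\sqrt q$. To obtain the stated bound one must pair the modes $\lambda$ and $\lambda\lambda_2$: for $f$ supported in $H_C$ one has $f_{\lambda\lambda_2}=-\epsilon_C f_\lambda$, and together with $\Sigma^0_{\xi\eta}=\epsilon_C\Sigma^0_\xi$ the paired terms recombine into differences $\Sigma^0_{(\lambda,\chi_0)}-\Sigma^0_{(\lambda,\chi_2)}$, so the $\lambda=\chi_0$ main term becomes $\#C(k)-(2g-2)\sqrt q$ and, after moving the $\lambda\in\{\chi_0,\lambda_2\}$ error terms to the left, one recovers $\#C(k)+(2g-2+2s)\sqrt q$ against the single factor $(2g-2+s)\sqrt q$ --- this regrouping modulo $\langle\lambda_2\rangle$ is the paper's argument and is missing from your sketch. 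Both gaps are repairable within your framework, but as written the case $p=2$, $\deg(h)=g$ is not proved.
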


\begin{remark}
Similar results can be obtains for $S \subseteq k^*$ when one takes the shape with respect to $k^*$.
\end{remark}

\begin{remark} \label{4c945}
Theorem \ref{4c911} depends on $C$ because of the dependence on $\# C(k)$. By Hasse-Weil we have $\#C(k) \geq q+1-2g \sqrt{q}$ and we can get rid of this dependence.
\end{remark}

From the above theorem we deduce one of the theorems of the introduction.

\begin{proof}[of Theorem \ref{4c845}]
 This follows from Theorem \ref{4c911}, Hasse-Weil and bounds on the shape (Proposition \ref{3cma}ii and Lemma \ref{3c99}).
\end{proof}

\begin{example}
Assume that $g=1$ in Theorem \ref{4c911}. Then using some crude estimates, one sees that we can apply the theorem if $2s \cdot \sh_{k^+} (S)
\leq
\#S$. Furthermore, the exceptional case corresponds to ordinary elliptic curves in characteristic $2$. In this case, there is a unique subgroup of
$\Pic^0_k(E) \cong E(k)$ of index $2$, namely $2E(k)$, which must be equal to $\ker(\psi_E)$. 
\end{example}

\subsection{Realizing Galois groups}

The goal in this subsection is to realize $k^+ \times \Pic^0_k(C)$ as the Galois group of an extension $M$ of $k(C)$ such that for $P 
\in \mathcal{P}_{k(C)/k}$ of degree $1$ we have $(P,M/k(C))=(x(P),[P]-[\infty'])$.

Let us realize $k^+$ first. Set $K=k(x)$.

\begin{proposition} \label{4c38}
Let $K_+=K[Y]/(Y^q-Y-x)$ and let $y=\overline{Y} \in K_+$. Then $K_+/K$ is a Galois extension of fields for which the following hold:
\begin{enumerate}
 \item the map $\varphi: k \to \Gal(K_+/K)$, $c \mapsto (y \mapsto y+c)$ is an isomorphism of groups;
 \item the extension is totally ramified at $\infty$, and is unramified at all the other primes;
 \item the extension is geometric;
 \item for $P \in \mathcal{P}_{K/k} \setminus \{\infty\}$ we have $(P,K_+/K)=\varphi(\Tv(P)) \in \Gal(K_+/K)$;
 \item $\mathfrak{f}(K_+/K)=2 \infty$, $\disc(K_+/K)=2(q-1)\infty$; the conductor of any nontrivial subextension
of $K_+/K$ is $2\infty$;
 \item $g(K_+)=0$.
\end{enumerate}
\end{proposition}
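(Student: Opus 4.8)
The plan is to analyze the Artin--Schreier extension $K_+ = K[Y]/(Y^q - Y - x)$ with $K = k(x)$ directly, using standard facts about Artin--Schreier theory in characteristic $p$. First I would observe that $Y^q - Y = \prod_{c \in k}(Y - c)$ as polynomials (since $k$ is the set of roots of $Y^q - Y$), so the separable polynomial $Y^q - Y - x$ has roots $\{y + c : c \in k\}$ in $K_+$ once one root $y$ is adjoined; this shows $K_+/K$ is Galois. For (i), the map $c \mapsto (y \mapsto y+c)$ is visibly a group homomorphism $k^+ \to \Gal(K_+/K)$, it is injective because distinct $c$ give distinct images on $y$, and since $[K_+:K] \mid q$ and the image has order $q$ (once we know $Y^q - Y - x$ is irreducible), it is an isomorphism. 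Irreducibility: $Y^q - Y - x$ is an Artin--Schreier-type polynomial in the variable $x$, and irreducibility follows because $x$ is not of the form $\wp(u) = u^q - u$ for any $u \in K$ (a pole-order / partial-fractions argument at $\infty$: any $u \in k(x)$ has $u^q - u$ with pole order at $\infty$ divisible by $p$ or equal to that of $u$, never exactly $1$).

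Next I would handle the ramification, (ii). At a finite prime $P \ne \infty$, $x$ is a unit or has nonnegative valuation, and more to the point $x \equiv (\text{element of residue field})$, so locally $Y^q - Y - x$ reduces to a separable polynomial over the residue field whenever one can solve $Y^q - Y \equiv \bar x$; in fact the extension is unramified at every finite place because the only ramification in an Artin--Schreier extension $\wp(Y) = f$ occurs at poles of (the reduced) $f$, and $x$ has its only pole at $\infty$. At $\infty$, writing $x = 1/t$ with $t$ a uniformizer, $x$ has a simple pole; the standard theory of Artin--Schreier extensions then gives that $K_+/K$ is totally ramified at $\infty$ with conductor exponent $m+1$ where $m = 1$ is the (prime-to-$p$ part of the) pole order, hence conductor $2\infty$. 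This simultaneously proves the first part of (v). That the full extension (not just a cyclic piece) is totally ramified at $\infty$ with a single prime above is immediate since it is totally ramified.

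For (iv), the Frobenius computation, I would use the explicit description of Frobenius in Artin--Schreier extensions: for an unramified prime $P$ corresponding to a monic polynomial, $(P, K_+/K)$ acts by $y \mapsto y + \gamma$ where $\gamma \in \F_p$ (or rather $k$) is characterized by $\gamma \equiv \operatorname{Tr}_{\F_{P}/\F_p}(\bar x) \pmod{\wp}$ evaluated appropriately — and here the precise claim is $(P, K_+/K) = \varphi(\Tv(P))$. The key identity is that for the prime $P$ given by $\sum_{i=0}^n a_i x^i$ with $a_n = 1$, the trace of (the image of) $x$ from the residue field $k[x]/(P)$ down to $k$ is exactly $-a_{n-1} = \Tv(P)$: this is the standard formula expressing the trace of a root in terms of the second coefficient of the minimal polynomial, combined with the fact that the Frobenius on an Artin--Schreier extension $\wp(Y)=x$ at $P$ adds the $\F_p$-trace... here one must be careful that $\Gal(K_+/K) \cong k^+$ rather than $\F_p^+$, so the relevant "trace" is the residue-field-to-$k$ trace of $\bar x$, which is $\Tv(P)$. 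I expect this step to need the most care in getting the normalization exactly right.

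Finally, for (iii) and (vi): geometricity means the full constant field of $K_+$ is still $k$, equivalently $\overline k \cdot K$ and $K_+$ are linearly disjoint over $K$; since $K_+/K$ is totally ramified at $\infty$ while $\overline k \cdot K / K$ is everywhere unramified, no nontrivial subextension of $K_+/K$ can be a constant field extension, giving (iii). For (vi), $g(K_+) = 0$: apply the conductor-discriminant formula to get $\disc(K_+/K)$. There are $q-1$ nontrivial characters of $\Gal(K_+/K) \cong k^+$, each with conductor $2\infty$, so $\disc(K_+/K) = 2(q-1)\infty$ as claimed in (v). Then Riemann--Hurwitz for $K_+/k(x)$ reads $2g(K_+) - 2 = q(2 \cdot 0 - 2) + \deg_k \disc(K_+/K) = -2q + 2(q-1) = -2$, so $g(K_+) = 0$. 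The remaining piece of (v), that \emph{every} nontrivial subextension has conductor exactly $2\infty$: each such subextension is cut out by a subgroup, corresponds to a set of nontrivial characters, and the conductor is the lcm of their conductors, all equal to $2\infty$, hence $2\infty$; alternatively it is itself a totally (wildly) ramified Artin--Schreier-type extension at $\infty$ with pole order $1$. The main obstacle throughout is bookkeeping the difference between the $\F_p$-structure native to Artin--Schreier theory and the $k^+$-structure the proposition asserts; once the map $\varphi$ and the trace normalization $\Tv$ are pinned down, the rest is standard ramification theory plus Riemann--Hurwitz.
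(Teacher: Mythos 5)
Your overall route (Artin--Schreier ramification theory at $\infty$, the conductor--discriminant formula to get $\disc(K_+/K)=2(q-1)\infty$, Riemann--Hurwitz to get $g(K_+)=0$, and the Frobenius computation via the trace of $x$ in the residue field, which is $-a_{n-1}=\Tv(P)$) is exactly the calculation the paper intends --- its own proof is a one-line citation of Riemann--Hurwitz and the F\"uhrerdiskriminantenproduktformel --- and parts (ii)--(vi) are handled correctly in your sketch. However, there is one genuine gap: the irreducibility criterion you invoke is false for non-prime $q$. It is not true that $Y^q-Y-a$ is irreducible whenever $a\neq u^q-u$ for all $u\in K$; for example, over $\F_4(x)$ the polynomial $Y^4-Y-1$ factors into two quadratics (its roots lie in $\F_{16}$, which has degree $2$ over $\F_4$), even though $1\neq u^4-u$ for every $u\in\F_4(x)$. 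For $q=p^m$ the correct criterion is that $ca\notin\wp(K)$ for every $c\in k^*$, where $\wp(u)=u^p-u$: then the $q-1$ degree-$p$ subextensions $K(z_c)$ with $\wp(z_c)=ca$, $z_c=\sum_{j=0}^{m-1}(cy)^{p^j}$, are all nontrivial, their compositum inside any factor field of $K[Y]/(Y^q-Y-a)$ already has degree $q$, and irreducibility follows. Your pole-order argument does apply verbatim to each $cx$ (pole of order $1$ at $\infty$, not divisible by $p$), so the gap is easily filled; alternatively, a single valuation computation closes it and proves (ii) at the same time: if $w$ lies over $\infty$ in a factor field, then $w(y)<0$, hence $q\,w(y)=w(y^q-y)=w(x)=-e(w/\infty)$, so $q$ divides $e$, forcing the degree and the ramification index at $\infty$ both to equal $q$.

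A related, smaller point: when you quote ``standard Artin--Schreier theory'' for total ramification at $\infty$ and conductor exponent $2$, the standard statements are for extensions of degree $p$, not degree $q$. The clean way to get (ii) and (v) for the full extension is the one you already use for the discriminant: every nontrivial character of $\Gal(K_+/K)\cong k^+$ factors through one of the degree-$p$ subextensions $K(z_c)$, each of which has conductor $2\infty$ by the degree-$p$ theory (pole order $1$), so the conductor of every nontrivial subextension, and of $K_+/K$ itself, is $2\infty$ by the lcm property; total ramification at $\infty$ then follows either from the valuation argument above or from the fact that every degree-$p$ subextension is ramified at $\infty$, so no index-$p$ subgroup of $k^+$ can contain the inertia group. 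With these repairs your argument is complete and is essentially the paper's.
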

\begin{proof}
This is a calculation which involves Riemann-Hurwitz (see \cite{ST}) and the F\"uhrerdiskriminantenproduktformel (see \cite{SE1}).
\end{proof}

\begin{proposition} \label{4c301}
 Let $K_+$ be as in the previous proposition (Proposition \ref{4c38}). For $c \in k^*$ put $z_{c}=(cy)+(cy)^p+(cy)^{p^2}+\ldots+(cy)^{p^{m-1}}$. For
$\overline{c} \in k^*/\F_p^*$ set $K_{\overline{c}}=K(z_c)$. Let $\tau_c: k \to \F_p^*$ be defined by $a \mapsto  \tr_{k/\F_p}(ca)$. Then
the following hold:
\begin{enumerate}
 \item $z_c$ is a zero of the irreducible polynomial $f_c=X^p-X-cx \in k(x)[X]$;
 \item $K_{\overline{c}}/K$ is Galois, the map $\varphi_c: \F_p \to \Gal(K_{\overline{c}}/K)$, $a \mapsto (z_c \mapsto z_c+a)$ is an isomorphism and
the
following diagram is commutative:
\[
\xymatrix{
\Gal(K_+/K) \ar[r]^{\sim} \ar@{->>}[d] & k \ar@{->>}[d]^{\tau_c} \\
\Gal(K_{\overline{c}}/K) \ar[r]^{\sim} & \F_p;
}  
\]
\item for $P \in \mathcal{P}_{K/k} \setminus \{\infty\}$ we have \[(P,K_{\overline{c}}/K)= \varphi_c(\tr_{k/\F_p}(c\Tv(P))) \in
\Gal(K_{\overline{c}}/K);\]
\item the map
\begin{eqnarray*}
 k^*/\F_p^* &\to& \{L: K \subseteq L \subseteq K_+, [L:K]=p\} \\
\overline{c} &\mapsto& K_{\overline{c}}
\end{eqnarray*}
is a bijection.
\end{enumerate}
\end{proposition}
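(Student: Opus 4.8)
The plan is to derive everything from Artin--Schreier theory and the structure of $\Gal(K_+/K) \cong k^+$ recorded in Proposition~\ref{4c38}, together with nondegeneracy of the trace pairing $k \times k \to \F_p$, $(a,b) \mapsto \tr_{k/\F_p}(ab)$. The only genuine computation is the identity $z_c^p - z_c = cx$: from $z_c = \sum_{i=0}^{m-1}(cy)^{p^i}$ one gets $z_c^p = \sum_{i=1}^{m}(cy)^{p^i}$, hence $z_c^p - z_c = (cy)^{p^m} - cy$; since $c \in k = \F_q$ we have $c^{p^m}=c$, and the defining relation of $K_+$ gives $y^{p^m} = y^q = y + x$, so $z_c^p - z_c = c(y+x) - cy = cx$. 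Thus $z_c$ is a root of $f_c = X^p - X - cx$, and for irreducibility of $f_c$ over $K = k(x)$ I would invoke Artin--Schreier and check $cx \notin \wp(k(x))$ (where $\wp(u)=u^p-u$) by a pole-order argument: any solution would have no finite poles, hence be a polynomial, and $\wp$ of a polynomial is constant or has degree a positive multiple of $p$, never $1$. This proves (i).

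For (ii): $z_c \in K_+$, so $K_{\overline{c}} = K(z_c) \subseteq K_+$; the roots of $f_c$ are $z_c + a$ with $a \in \F_p$, so $K_{\overline{c}}/K$ is Galois of degree $p$ and $\varphi_c$ is an isomorphism. To check the square commutes, take $\sigma \in \Gal(K_+/K)$ corresponding under Proposition~\ref{4c38}i to $b \in k$, i.e. $\sigma(y) = y + b$, and compute
\[
\sigma(z_c) \;=\; \sum_{i=0}^{m-1}\bigl(c(y+b)\bigr)^{p^i} \;=\; z_c + \sum_{i=0}^{m-1}(cb)^{p^i} \;=\; z_c + \tr_{k/\F_p}(cb),
\]
so $\sigma|_{K_{\overline{c}}} = \varphi_c(\tr_{k/\F_p}(cb)) = \varphi_c(\tau_c(b))$, which is exactly commutativity of the diagram.

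Part (iii) is then immediate: the Frobenius is compatible with restriction, so for $P \in \mathcal{P}_{K/k}\setminus\{\infty\}$ one has $(P,K_{\overline{c}}/K) = (P,K_+/K)\big|_{K_{\overline{c}}}$; by Proposition~\ref{4c38}iv the Frobenius on $K_+$ is $\varphi(\Tv(P))$, and (ii) identifies its restriction as $\varphi_c(\tau_c(\Tv(P))) = \varphi_c(\tr_{k/\F_p}(c\,\Tv(P)))$. For (iv), first note the map is well defined: if $c' = \lambda c$ with $\lambda \in \F_p^*$ then $z_{c'} = \sum_i(\lambda c y)^{p^i} = \lambda\sum_i(cy)^{p^i} = \lambda z_c$ (using $\lambda^{p^i}=\lambda$), so $K_{\overline{c'}} = K_{\overline{c}}$. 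By the Galois correspondence the degree-$p$ subextensions of $K_+/K$ correspond to the index-$p$ subgroups of $\Gal(K_+/K)\cong k$, i.e. the $\F_p$-hyperplanes of $k$; since $\tr_{k/\F_p}$ is nondegenerate, each such hyperplane equals $\ker(\tau_c)$ for a unique $\overline{c} \in k^*/\F_p^*$. By (ii) the subgroup fixing $K_{\overline{c}}$ is $\ker(\tau_c)$, so $\overline{c}\mapsto K_{\overline{c}}$ is the composition of the bijection $\overline{c}\mapsto\ker(\tau_c)$ with the Galois correspondence, hence a bijection.

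All the steps are short, so there is no substantial obstacle; the only point demanding attention is bookkeeping of variances in (ii) so that in (iv) the hyperplane attached to $\overline{c}$ is genuinely the fixed subgroup of $K_{\overline{c}}$ and not its annihilator, together with recording that $\tr_{k/\F_p}$ is nondegenerate because $k/\F_p$ is separable.
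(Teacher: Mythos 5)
Your proof is correct and follows exactly the route the paper intends: its own proof is just the remark that the proposition is ``a calculation which follows easily from Proposition \ref{4c38}'', and your computation of $z_c^p-z_c=cx$, the Artin--Schreier irreducibility check, the action $\sigma(y)=y+b \Rightarrow \sigma(z_c)=z_c+\tr_{k/\F_p}(cb)$, and the Galois-correspondence/trace-nondegeneracy argument for (iv) are precisely that calculation spelled out. No discrepancies to report.
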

\begin{proof}
This is a calculation which follows easily from Proposition \ref{4c38}. 
\end{proof}

Proposition \ref{2c843} gives us an extension $k(C)_{[\infty']}/k(C)$ which is unramified with Galois group $\Pic^0_k(C)$ and the Frobenius
at a rational point is $[P]-[\infty']$. Proposition \ref{4c38} gives us an extension $K_+/K$ with group $k^+$. Consider the
following diagram
of function fields:
\[
\xymatrix{
 & & k(C)_{+,[\infty']}=k(C)_+ k(C)_{[\infty']} & \\
 & k(C)_+=K_+k(C) \ar[ru] & & k(C)_{[\infty']} \ar[lu] \\
 K_+ \ar[ru] & & k(C) \ar[lu] \ar[ru]_{\Pic^0_k(C)} & \\
 & K=k(x). \ar[lu]^{k^+} \ar[ru]_{C_2} & &
}\]

We will first study $\Gal(k(C)_+/K)$. 

First of all, the extension $k(C)/K$ is Galois with group $C_2$ and totally ramified at $\infty$ and at some
more points. The extension $K_+/K$ is geometric and Galois with group $k^+$ and totally ramified at $\infty$. Consider the extension
$k(C)_+/K$. As $K_+$ and $k(C)$ are linearly disjoint by genus considerations (Riemann-Hurwitz), $k(C)_+/K$ is Galois with group $k^+ \times C_2$. Also $k(C)_+/k(C)$ is Galois
with group $k^+$. We claim that $k(C)_+/K$ is geometric. 
If $\chart(k) \neq 2$, then as $(\# k,2)=1$, the extension $k(C)_+/K$ is totally ramified at $\infty$. Assume that $\chart(k)=2$ and that $\deg(h)
<g$. The conductor at $\infty$ of $k(C)/K$ is $2(g+1-\deg(h)) \infty$, 
which is more than the conductor of $K_+/K$ at $\infty$, which is $2 \infty$. Hence $k(C)_+/K$ is totally ramified at $\infty$ and $k(C)_+/k(C)$ is
totally ramified at $\infty'$.  
Assume that $\chart(k)=2$ and that $\deg(h)=g$. In this case, take a prime of $K$, not $\infty$, dividing $h$. Then $k(C)/K$ is ramified at this
prime, but $K_+/K$ is not. Hence $k(C)_+/K_+$ is ramified at a prime above such a
prime, and it cannot be a constant field extension. We conclude that $k(C)_+/K$ is always geometric. 

The only possible ramification in $k(C)_+/k(C)$ is at $\infty'$. We have already shown that it is totally
ramified at $\infty'$ if $\chart(k) \neq 2$ or $\chart(k)=2$ and $\deg(h)=g$.
One knows that the maximal abelian extension of $K_{\infty}$, the completion of $K$ at $\infty$, which is totally ramified of conductor $2$ has
degree $q$. Hence if $\chart(k)=2$ and $\deg(h)=g$, we see that $k(C)_+/K$ cannot be totally ramified at $\infty$. Hence in this case $k(C)_+/k(C)$
cannot be totally ramified. There is a unique field $L$ with $k(C) \subseteq
L \subseteq k(C)_+$
with $[L:k(C)]=2$ which is unramified at $\infty'$, and hence unramified. 

\begin{lemma} \label{4c999}
 Let $k$ be a finite field of characteristic $p$ and let $a \in k$. Then $f_a=x^p-x-a \in k[x]$ is irreducible if and only if
$\tr_{k/\F_p}(a) \neq 0$.
\end{lemma}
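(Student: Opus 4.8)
The plan is to use the classical structure of Artin--Schreier polynomials, reducing the irreducibility question to the existence of a root in $k$ and then to a trace condition.

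First I would record that $f_a$ is separable: since $\chart(k)=p$, the formal derivative is $f_a'=px^{p-1}-1=-1$, so $\gcd(f_a,f_a')=1$. Next, if $\alpha$ is a root of $f_a$ in a fixed algebraic closure, then for every $c\in\F_p$ one has $(\alpha+c)^p-(\alpha+c)-a=\alpha^p-\alpha-a=0$, so the roots of $f_a$ are exactly $\{\alpha+c:c\in\F_p\}$. In particular the splitting field of $f_a$ over $k$ is $k(\alpha)$, and $[k(\alpha):k]$ divides $p$. Hence $f_a$ is either irreducible over $k$ or splits completely into linear factors over $k$; equivalently, $f_a$ is reducible over $k$ if and only if it has a root in $k$.

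It then remains to show that $f_a$ has a root in $k$ if and only if $\tr_{k/\F_p}(a)=0$. Here I would introduce the Artin--Schreier operator $\wp\colon k\to k$, $\wp(x)=x^p-x$, which is $\F_p$-linear with kernel $\F_p$ (the fixed set of the $p$-power Frobenius), hence has image of cardinality $q/p$ where $q=\#k$. For one direction, if $\alpha\in k$ satisfies $\wp(\alpha)=a$, then using that $\tr_{k/\F_p}$ is invariant under $x\mapsto x^p$ (the $p$-power map merely permutes the conjugates, since $\alpha^{p^n}=\alpha$ where $q=p^n$) we get $\tr_{k/\F_p}(a)=\tr_{k/\F_p}(\alpha^p)-\tr_{k/\F_p}(\alpha)=0$. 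Thus $\mathrm{im}(\wp)\subseteq\ker(\tr_{k/\F_p})$.

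For the converse I would use a dimension count. The map $\tr_{k/\F_p}\colon k\to\F_p$ is $\F_p$-linear and surjective (the trace form of a finite separable extension is nondegenerate), so $\ker(\tr_{k/\F_p})$ also has cardinality $q/p$. Since $\mathrm{im}(\wp)\subseteq\ker(\tr_{k/\F_p})$ and both sets have cardinality $q/p$, they coincide. Therefore $\tr_{k/\F_p}(a)=0$ implies $a\in\mathrm{im}(\wp)$, i.e.\ $f_a$ has a root in $k$. Combining with the structure statement of the second paragraph: $f_a$ is irreducible over $k$ if and only if it has no root in $k$, if and only if $a\notin\ker(\tr_{k/\F_p})$, if and only if $\tr_{k/\F_p}(a)\neq 0$. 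The only mildly delicate point is the equality $\mathrm{im}(\wp)=\ker(\tr_{k/\F_p})$, but once the inclusion is established this is forced purely by the cardinality count; everything else is elementary.
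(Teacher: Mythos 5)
Your proof is correct, and since the paper simply leaves this lemma as an exercise (referring to the author's thesis), there is no in-paper argument to diverge from: what you give is the standard Artin--Schreier argument (either irreducible or split, plus $\mathrm{im}(\wp)=\ker(\tr_{k/\F_p})$ by inclusion and a cardinality count), which is exactly what is intended. The only step you assert rather than prove is that $[k(\alpha):k]$ divides $p$; this is immediate once you note that all irreducible factors of $f_a$ have the same degree (the roots $\alpha+c$ with $c\in\F_p\subseteq k$ generate the same extension and their minimal polynomials are translates of one another), so it is worth a clause but is not a gap.
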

\begin{proof}
We leave the proof as an exercise for the reader (see \cite{KO6}).
\end{proof}

The following lemma explicitly describes $L$.

\begin{lemma} \label{4c438}
 Assume that $p=2$ and that $\deg(h)=g$. For $i=0,1$ put $d_i=f_{2g+i}+\sqrt{f_{2g+2}}h_{g-1+i}$.
Then the unique unramified subextension $L$ of $k(C)_+/k(C)$
comes from the subextension of $K_+/K$ given by $z^2-z-cx$ with $c=\frac{d_1}{h_g^2}$. This extension
$L/k(C)$ is totally split at $\infty'$ if and only if $\tr_{k/\F_2}(\frac{d_0}{h_g^2}) =0$.
\end{lemma}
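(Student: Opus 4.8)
The plan is to pin down which subextension of $K_+/K$ gives rise to the unramified subextension $L$ of $k(C)_+/k(C)$, by matching ramification invariants at $\infty'$, and then to decide the splitting behaviour of $L$ at $\infty'$ via a Frobenius computation. Recall that by Proposition \ref{4c301} every subextension of $K_+/K$ of degree $p=2$ is of the form $K_{\overline c}$, generated by a root of $z^2-z-cx$, as $\overline c$ ranges over $k^*/\F_2^*$. Lifting to $k(C)$, the compositum $K_{\overline c}\cdot k(C)$ is a degree-$2$ subextension of $k(C)_+/k(C)$, and by the discussion preceding Lemma \ref{4c999} there is exactly one such subextension, namely $L$, that is unramified (equivalently, unramified at $\infty'$, the only possible ramified place). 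So the first task is: among the curves $z^2-z-cx$, find the unique $c\in k^*$ (up to $\F_2^*$, but $\F_2^*$ is trivial so $c$ is unique) for which the pullback to $k(C)$ is unramified at $\infty'$.

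First I would work in the completion $k(C)_{\infty'}$. Using the chart $V'=\Spec(k[x',y']/(y'^2+h_\infty(x')y'-f_\infty(x')))$ from Subsection \ref{t1} with $x=1/x'$, $y=x^{g+1}y'$, one gets a uniformizer $t$ at $\infty'$ with $x = t^{-2}\cdot(\text{unit})$ (since $\infty$ is ramified in $k(C)/k(x)$ by our conventions, $v_{\infty'}(x)=-2$). The Artin–Schreier extension $z^2-z-cx$ of $k(C)$ is ramified at $\infty'$ precisely when $cx$ cannot be adjusted modulo $\wp(k(C)_{\infty'})$ (where $\wp(u)=u^2-u$) to have nonnegative valuation; and since $v_{\infty'}(cx)=-2$ is even, one can subtract off a suitable square to cancel the pole and reduce the conductor. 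Explicitly, writing $cx = (\alpha t^{-1})^2 + (\text{higher order})$ for the appropriate $\alpha\in k(C)^\times$ with $v_{\infty'}(\alpha t^{-1})=-1$, the extension becomes unramified at $\infty'$ iff the residual Artin–Schreier datum, after this cancellation, lies in the image of $\wp$ on the residue field — which forces a precise relation on $c$. Carrying out the expansion of $x$ and of $\sqrt{f_{\infty}}$-type quantities in terms of $t$, and using the defining relation $y'^2 + h_\infty y' = f_\infty$ to express the square root of the leading coefficient, the vanishing of the order-$t^{-1}$ obstruction will come out to $c = d_1/h_g^2$ with $d_i = f_{2g+i} + \sqrt{f_{2g+2}}\,h_{g-1+i}$; this is where the hypotheses $\deg(h)=g$, $\deg(f)\in\{2g+1,2g+2\}$, and condition (4c) $(h_{g+1},h_g^2f_{2g+2}+f_{2g+1}^2)=k$ (here $h_{g+1}=0$) enter to guarantee the relevant leading coefficients are nonzero and $c\neq 0$.

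Once $L = k(C)(z)$ with $z^2-z-cx$, $c=d_1/h_g^2$, is identified, the splitting of $L/k(C)$ at $\infty'$ is governed by the residual Artin–Schreier class at $\infty'$: after cancelling the pole of $cx$ by a square as above, $L\otimes_{k(C)}k(C)_{\infty'}$ is split iff the resulting constant term $\gamma\in k$ satisfies $\tr_{k/\F_2}(\gamma)=0$ (by Lemma \ref{4c999} applied to the residue field $k$), and is the unramified field extension of degree $2$ otherwise. The same expansion that produced $c=d_1/h_g^2$ yields $\gamma = d_0/h_g^2$ for the order-$t^0$ term, so $L/k(C)$ is totally split at $\infty'$ iff $\tr_{k/\F_2}(d_0/h_g^2)=0$, as claimed.

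The main obstacle is purely computational: carrying out the Laurent expansion at $\infty'$ carefully enough to read off both the order-$t^{-1}$ obstruction (giving $c$) and the order-$t^0$ term (giving the splitting criterion), while correctly handling the square root $\sqrt{f_{2g+2}}$ that appears because in characteristic $2$ the leading behaviour of $y$ involves extracting a square root of the leading coefficient of $f$. One must also verify the needed nonvanishing ($h_g\neq 0$ since $\deg h = g$, and that the adjustments are legitimate inside $k(C)_{\infty'}$ rather than a larger field), but these follow from the standing conditions (4a)–(4c) on $(f,h)$ in Subsection \ref{t1}.
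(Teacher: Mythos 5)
Your proposal is correct and takes essentially the same route as the paper: the paper's proof carries out your ``cancel the even-order pole of $cx$ at $\infty'$ modulo $u^2-u$'' step by the explicit global substitution $y'=y+\sqrt{f_{2g+2}}x^{g+1}$, $y''=y'/(h_gx^g)$, so that $(y''+z)^2+(y''+z)$ equals $d_0/h_g^2$ plus a term of positive valuation at $\infty'$, and then concludes unramifiedness and the splitting criterion from Kummer's theorem together with Lemma \ref{4c999}, exactly the two facts you invoke (with uniqueness of $L$ taken from the preceding discussion, as you do). The only difference is presentational -- Laurent expansion in the completion versus that substitution -- and the leading-coefficient bookkeeping you defer (which indeed yields $c=d_1/h_g^2$ and constant term $d_0/h_g^2$) is precisely what the substitution performs.
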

\begin{proof}
Let $v$ be the normalized valuation at $\infty'$ of $k(C)$. Then $v(x)=-2$ as $k(C)/K$ is ramified. We have $\deg(f) \in \{2g+1,2g+2\}$. 
Put $y'=y+\sqrt{f_{2g+2}}x^{g+1} \in k(C)$. Then we have $y'^2+hy'=f_{\new}$ where $f_{\new}=f+f_{2g+2}x^{2g+2}+\sqrt{f_{2g+2}}
hx^{g+1}$. Note that $f_{\new,2g+1}=d_1$ is nonzero, as its square is nonzero by our assumptions on $(f,h)$. Hence $f_{\new}$ is of degree $2g+1$. From the equation which $y'$ satisfies, one easily obtains $v(y')=-(2g+1)$. 

 Let $z$ be an element of $K_+$ satisfying $z^2-z-d_1x/h_g^2=0$ (Proposition \ref{4c38} for the existence). 
Notice that $y''=y'/(h_gx^g)$ satisfies
\begin{eqnarray*}
 y''^2+y'' &=& \frac{f_{\new}(x)+(h(x)-h_gx^g)y'}{h_g^2 x^{2g}} \\
&=& \frac{d_1x}{h_g^2}+\frac{d_0}{h_g^2}+ \frac{(f_{\new}-d_1x^{2g+1}-d_0x^{2g})+(h(x)-h_gx^g)y'}{h_g^2x^{2g}}.
\end{eqnarray*}
Hence we have
\begin{eqnarray*}
 (y'+z)^2+(y'+z) = \frac{d_0}{h_g^2}+\frac{(f_{\new}(x)-f_{2g+1}x^{2g+1}-f_{2g}x^{2g})+(h(x)-h_gx^g)y'}{h_g^2x^{2g}}.
\end{eqnarray*}
The valuation of the right hand side at infinity is non-negative and the part in the fraction has a positive valuation. The theorem of Kummer (\cite[Chapter 2, Theorem 3.7]{ST}) gives the follwing. It shows that the extension $L/k(C)$ is unramified at infinity, and that the extension splits completely at infinity if and
only if the
polynomial
$x^2+x+\frac{d_0}{h_g^2}$ is not irreducible in $k[x]$. This happens if and only if $\tr_{k/\F_2}(\frac{d_0}{h_g^2}) =0$ by Lemma
\ref{4c999}. 
\end{proof}

The following lemma gives us the conductor of subextensions of $k(C)_+/k(C)$.

\begin{lemma} \label{4c754}
Let $L'$ be a subextension of degree $p$ of $k(C)_+/k(C)$ which is totally ramified at $\infty'$. Then one has 
\begin{eqnarray*}
\mathfrak{f}(L'/k(C))= \left\{ \begin{array}{cc} 2\infty' & p=2 \\
          3 \infty' & p \neq 2.
         \end{array} \right.
\end{eqnarray*}
\end{lemma}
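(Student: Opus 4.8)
The plan is to reduce the computation of $\mathfrak{f}(L'/k(C))$ to a purely local question at $\infty'$, using the fact that $L'$ is unramified outside $\infty'$ (this is forced: by Proposition \ref{4c38}, $K_+/K$ is unramified away from $\infty$, and since $k(C)/K$ ramifies at $\infty$ we have only $\infty'$ lying over it that can carry ramification). So $\mathfrak{f}(L'/k(C))= m \infty'$ for some $m \geq 1$, and everything is about computing the local conductor exponent of an Artin--Schreier extension of the completion $k(C)_{\infty'}$.

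First I would treat the case $p \neq 2$. Here $L'$ comes by base change from one of the degree-$p$ subextensions $K_{\overline c}/K$ of $K_+/K$ described in Proposition \ref{4c301}, which is defined by $z^p - z = c x$ with $c \in k^*$. Let $v$ be the normalized valuation at $\infty'$ of $k(C)$; since $k(C)/K$ is ramified at $\infty$ (as $\deg(f)=2g+1$ under our conventions, see the Convention before Proposition \ref{4c923}) we have $v(x) = -2$. Thus the Artin--Schreier element $cx$ has $v(cx)=-2$, which is prime to $p$ (as $p \neq 2$), so the extension is in standard Artin--Schreier form with jump $d = 2$, hence totally ramified at $\infty'$ with conductor exponent $d+1 = 3$. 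This gives $\mathfrak{f}(L'/k(C)) = 3\infty'$. One should note that $v(cx)=-2$ is already the minimal valuation achievable in the Artin--Schreier class of $cx$ over $k(C)_{\infty'}$, since $-2$ is odd-free — wait, rather since $-2$ is not of the form $p \cdot (\text{integer})$ when $p$ is odd, no substitution $z \mapsto z + w$ with $w \in k(C)_{\infty'}$ can raise the valuation; so the jump is genuinely $2$.

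For $p = 2$ the argument is essentially the content of Lemma \ref{4c438} and its proof. There we exhibited the unramified subextension $L$ explicitly, and the remaining degree-$2$ subextensions of $k(C)_+/k(C)$ that are totally ramified at $\infty'$ are obtained by twisting: a totally ramified $L'$ corresponds to an Artin--Schreier class $u \in k(C)_{\infty'}/\wp(k(C)_{\infty'})$ with $v(u)$ odd and negative after reduction. The computation in the proof of Lemma \ref{4c438}, applied with $y'$ replaced by the relevant generator, shows that the worst pole one can arrange is $v = -1$ (for instance $y'' = y'/(h_g x^g)$ there satisfies an Artin--Schreier equation whose right-hand side, after subtracting the unramified part, has valuation exactly $-1$ at $\infty'$ because $v(y')=-(2g+1)$ and $v(h_g x^g) = -2g$). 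A valuation-$-1$ Artin--Schreier extension has conductor exponent $1+1 = 2$, giving $\mathfrak{f}(L'/k(C)) = 2\infty'$.

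The main obstacle is the $p=2$ case: one must check that for \emph{every} totally ramified degree-$2$ subextension $L'$ (not just the distinguished one tied to $y'$) the jump at $\infty'$ is exactly $1$, i.e. that no such $L'$ has a larger conductor, and that no such $L'$ is secretly the unramified one. The cleanest way is to use that $k(C)_+/k(C)$ has conductor bounded by that of $K_+/K$ base-changed — by Proposition \ref{4c38}(v) every nontrivial subextension of $K_+/K$ has conductor $2\infty$ at $\infty$, and pulling back along the ramified-of-even-conductor extension $k(C)/K$ controls the possible jumps — combined with the F\"uhrerdiskriminantenproduktformel to pin down that the only subextension with conductor exponent $< 2$ is the single unramified $L$ of Lemma \ref{4c438}. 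All the remaining steps are the routine local Artin--Schreier computations sketched above.
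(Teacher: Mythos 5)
Your reduction to a purely local computation at $\infty'$ is sound (ramification can only occur over $\infty$, and $\infty'$ is the unique place there), and your treatment of $p\neq 2$ is correct and complete: $v_{\infty'}(cx)=-2$ is prime to $p$, so the Artin--Schreier jump is $2$ and the conductor exponent is $3$. Note that this is a genuinely different route from the paper, which never touches Artin--Schreier representatives over $k(C)$: it simply feeds $\mathfrak{f}(K_{\overline{c}}/K)_\infty=2$ (Proposition \ref{4c38}(v)) and the conductor of $k(C)/K$ at $\infty$ into the compositum--conductor statement, Lemma \ref{2c477}, which handles both characteristics at once.

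The gap is in your $p=2$ case. What you must show is that for \emph{every} $c\in k^*$ the class of $cx$ in $k(C)_{\infty'}/\wp(k(C)_{\infty'})$ has a representative of valuation $\geq -1$, and your justification does not establish this. The computation of Lemma \ref{4c438} concerns only the single class $c=d_1/h_g^2$, and only in the sub-case $\deg(h)=g$ (it says nothing about $p=2$, $\deg(h)<g$, which Lemma \ref{4c754} also covers); moreover the valuations you quote, $v(y')=-(2g+1)$ and $v(h_gx^g)=-2g$, give $v(y'')=-1$ for the \emph{reduction element} $y''$, not for the reduced representative --- in Lemma \ref{4c438} that representative has non-negative valuation, which is exactly why $L$ is unramified there. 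Twisting by this unramified class does not help for other $c$, since $(c-d_1/h_g^2)x$ still has a pole of order $2$. Finally, your ``cleanest way'' paragraph is essentially an unexecuted sketch of the paper's own Lemma \ref{2c477} (Proposition \ref{4c38}(v) plus the F\"uhrerdiskriminantenproduktformel), so as written the needed bound is nowhere proved. The fix is short and uniform in $\deg(h)$: at $\infty'$ write $cx=a\,t^{-2}+b\,t^{-1}+\cdots$ with $a\neq 0$; since the residue field $k$ is perfect, subtracting $\wp(\sqrt{a}\,t^{-1})=a\,t^{-2}+\sqrt{a}\,t^{-1}$ yields a representative of valuation $\geq -1$, so the jump is at most $1$; as $L'$ is assumed ramified at $\infty'$ and a ramified quadratic extension in characteristic $2$ is wild, the jump is exactly $1$ and the conductor exponent is $2$. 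With that substitution your argument closes correctly.
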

\begin{proof}
This follows from Lemma \ref{2c477} and Proposition \ref{4c38}.
\end{proof}

The next step is to study the extension $k(C)_{+,[\infty']}/k(C)$. If $p \neq 2$ or $p=2$ and $\deg(h)<g$, then we have seen above that $k(C)_+/k(C)$
is totally ramified at
$\infty'$. As $k(C)_{[\infty']}/k(C)$ is unramified, it shows that $k(C)_+$ and $k(C)_{[\infty']}$ are disjoint over $k(C)$. In this case we have
$\Gal(k(C)_{+,[\infty']}/k(C))=k^+
\times \Pic^0_k(C)$. 

 Assume that $p=2$ and that $\deg(h)=g$. We want to understand the Galois extension $k(C)_{+,[\infty']}/k(C)$. Using Lemma \ref{4c438} and Proposition
\ref{2c843}, we
see that two things can happen:
If $\tr_{k/\F_2}(\frac{d_0}{h_g^2}) =0$, then one obtains $L \subseteq k(C)_{[\infty']}$ (there is a unique maximal extension where $\infty'$ splits). This means that there is a surjective homomorphism $\Pic^0_k(C) \to
\Gal(L/k(C))$. One has $\Gal(k(C)_{+,[\infty']}/k(C))=k^+ \times_{\Gal(L/k(C))} \Pic_k^0(C)$.
If $\tr_{k/\F_2}(\frac{d_0}{h_g^2}) =1$, then $k(C)_+$ and $k(C)_{[\infty']}$ are disjoint, and $\Gal(k(C)_{+,[\infty']}/k(C))=k^+ \times
\Pic^0_k(C)$.
Unfortunately, the extension is not geometric. There is a degree $2$ extension of $k$ inside $k(C)_{+,[\infty']}$ (Proposition \ref{2c843}). Also in this case one can produce a surjective homomorphism $\Pic^0_k(C) \to \Gal(L/k(C))$. 

\begin{proof}[of Proposition \ref{4c923}]
 Assume first $\tr_{k/\F_2}(\frac{d_0}{h_g^2})=0$. Then $L \subseteq k(C)_{[\infty']}$ and this gives a surjective map $\psi_C$ on the Galois
groups. To see what it does, we look at the Frobenius elements. Let $P$ be a prime of degree $n$ in $k(C)$. One has
$(P,k(C)_{[\infty']}/k(C))=[P]-n[\infty'] \in \Pic^0_k(C)$ (Proposition \ref{2c843}). This Frobenius maps to
$(P,L/k(C))=\tr_{k/\F_2}(\frac{\fv(P/P|_K)\Tv(P|_K) d_1}{h_g^2})=\tr_{k/\F_2}(\frac{\fv(P/P|_K)\Tv(P|_K)d_1+\deg_k(P)d_0}{h_g^2})$ (Proposition \ref{4c301} and Lemma \ref{4c438}).

Assume $\tr_{k/\F_2}(\frac{d_0}{h_g^2})=1$. Let $L'$ be the third degree $2$ extension in the $V_4$
extension $L k'$ over $k(C)$ where $k'$ is the unique degree $2$ extension of $k$. Then we have a natural map $\Pic^0_k(C) \to
\Gal(L'/k(C))=\F_2$ (Proposition \ref{2c843}). Let $P$ be a prime of $k(C)$ of degree $n$. Note that there is a unique maximal extension
in $Lk'/k(C)$ where $P$ is totally split. Assume that $n$ is even. Then $P$ splits in $L'/k(C)$ iff it splits
in $L/k(C)$. If $n$ is odd, then $P$ splits in $L'/k(C)$ iff it does not
split in $L/k(C)$. This gives the required map.
\end{proof}

\subsection{Character sum estimates} \label{4c555}
Put 
\begin{eqnarray*}
C(k)^*=C(k) \setminus \{\infty'\}=\unr^1(k(C)_{+,[\infty']}/k(C)).
\end{eqnarray*}
Let $\lambda \in k^{\vee}$ and $\chi \in \Pic^0_k(C)^{\vee}$.
Since we have a natural map
$\Gal(k(C)_{+,[\infty']}/k(C)) \to
k^+ \times \Pic^0_k(C)$, we can view $(\lambda,\chi)$ as a character of $\Gal(k(C)_{+,[\infty']}/k(C))$ by taking the product. We put 
\begin{eqnarray*}
c_{(\lambda,\chi)}= \sum_{P \in C(k)^*}(\lambda,\chi)(P,k(C)_{+,[\infty']}/k(C))= \sum_{P \in C(k)^*}\lambda(x(P)) \chi(\varphi_C(P))
\end{eqnarray*}
(we avoid the only ramification at $\infty'$). Our goal is to estimate these $c_{(\lambda,\chi)}$. 
Put $s=2$ if $p=2$ and $s=3$ if $p \neq 2$. 

\subsubsection{Case 1} Assume that $p \neq 2$ or $p=2$ and $\deg(h)<g$.

\begin{lemma} \label{4c111}
 The following hold for $\lambda \in k^{\vee}$ and $\chi \in \Pic^0_k(C)^{\vee}$.
\begin{enumerate}
 \item if $\lambda \neq \chi_0$, then $|c_{(\lambda,\chi)}| \leq (2g-2+s) \sqrt{q}$;
 \item $c_{\chi_0,\chi_0} = \#C(k)-1$;
 \item if $\chi \neq \chi_0$, then $|c_{(\chi_0,\chi)}+1| \leq (2g-2) \sqrt{q}$.
\end{enumerate}
\end{lemma}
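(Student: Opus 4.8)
\textbf{Proof proposal for Lemma \ref{4c111}.}

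The plan is to apply the character sum bound of Theorem \ref{2c43} to the extension $k(C)_{+,[\infty']}/k(C)$, interpreting each of the three cases as a statement about a character $(\lambda,\chi)$ of $\Gal(k(C)_{+,[\infty']}/k(C))$ and computing the degree of its conductor. Since we are in Case 1, the analysis in the previous subsection shows that $k(C)_+/k(C)$ is totally ramified at $\infty'$, so $k(C)_+$ and $k(C)_{[\infty']}$ are disjoint over $k(C)$ and $\Gal(k(C)_{+,[\infty']}/k(C)) = k^+ \times \Pic^0_k(C)$; moreover this extension is geometric, so Theorem \ref{2c43} applies directly once we know a given character is nontrivial (i.e. injective on the cyclic subgroup it generates after restricting to its kernel's fixed field, which is exactly what Theorem \ref{2c43} needs). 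The sum $c_{(\lambda,\chi)} = \sum_{P \in C(k)^*} (\lambda,\chi)(P, k(C)_{+,[\infty']}/k(C))$ is precisely $\sum_{P \in \unr^1(k(C)_{+,[\infty']}/k(C))} \deg_k(P)\, (\lambda,\chi)(P, \cdot)$ since the only ramified place $\infty'$ has been removed and all points of $C(k)^*$ have degree $1$.

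For part (ii), if $\lambda = \chi_0$ and $\chi = \chi_0$ the character is trivial, so each summand is $1$ and $c_{(\chi_0,\chi_0)} = \#C(k)^* = \#C(k) - 1$. For part (iii), take $\lambda = \chi_0$ and $\chi \neq \chi_0$: then the character $(\chi_0,\chi)$ factors through $\Pic^0_k(C)$, i.e. through the unramified extension $k(C)_{[\infty']}/k(C)$, so its conductor is trivial (degree $0$) and Theorem \ref{2c43} gives $|\sum_{P \in \unr^1(k(C)_{[\infty']}/k(C))} \deg_k(P)\chi(\cdot)| \leq (2g(k(C)) - 2)\sqrt{q} = (2g-2)\sqrt{q}$. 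The unramified places of $k(C)_{[\infty']}/k(C)$ of degree $1$ are exactly all of $C(k)$, whereas $C(k)^*$ omits $\infty'$; since $\chi(\varphi_C(\infty')) = \chi(0) = 1$, we have $\sum_{P \in C(k)} \deg_k(P)\chi = c_{(\chi_0,\chi)} + 1$, which yields $|c_{(\chi_0,\chi)} + 1| \leq (2g-2)\sqrt{q}$.

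The main work is part (i): $\lambda \neq \chi_0$. Here I would argue that the character $(\lambda,\chi)$ has conductor supported only at $\infty'$. The only place where $k(C)_{+,[\infty']}/k(C)$ ramifies is $\infty'$ (since $k(C)_{[\infty']}/k(C)$ is unramified and $k(C)_+/k(C)$ ramifies only at $\infty'$), so $\mathfrak{f}((\lambda,\chi))$ is a multiple of $\infty'$. To bound its coefficient there, note that the fixed field of $\ker(\lambda,\chi)$ is contained in the compositum of the fixed field of $\ker(\lambda,\chi_0)$ — a subextension of $k(C)_+/k(C)$ — with an unramified extension; by Lemma \ref{2c555} the conductor of $(\lambda,\chi) = (\lambda,\chi_0)\cdot(\chi_0,\chi)$ satisfies $\mathfrak{f}((\lambda,\chi)) \leq \lcm(\mathfrak{f}(\lambda,\chi_0), \mathfrak{f}(\chi_0,\chi)) = \mathfrak{f}(\lambda,\chi_0)$ since the second factor is unramified. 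Now $(\lambda,\chi_0)$ with $\lambda \neq \chi_0$ corresponds to a nontrivial subextension of $k(C)_+/k(C)$ of degree $p$; if it is totally ramified at $\infty'$, Lemma \ref{4c754} gives conductor $s\infty'$ (with $s=2$ if $p=2$, $s=3$ if $p\neq 2$). One must rule out the possibility that $\lambda$ corresponds to the unramified subextension — but that subextension only exists when $p=2$ and $\deg(h)=g$, which is excluded in Case 1; so in Case 1 every nontrivial degree-$p$ subextension of $k(C)_+/k(C)$ is totally ramified at $\infty'$ with conductor exactly $s\infty'$, hence $\deg_k\mathfrak{f}((\lambda,\chi)) \leq s$. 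Theorem \ref{2c43} then yields $|c_{(\lambda,\chi)}| \leq (2g(k(C)) - 2 + s)\sqrt{q} = (2g - 2 + s)\sqrt{q}$, as claimed. The delicate point to get right is the reduction that $\mathfrak{f}((\lambda,\chi))_{\infty'}$ is controlled by $\mathfrak{f}((\lambda,\chi_0))_{\infty'}$ and never by a larger quantity coming from the interaction of $\lambda$ and $\chi$ at $\infty'$ — this is exactly why we invoke Lemma \ref{2c555} with the unramifiedness of the $\chi$-part, and why the genus $g(k(C)) = g$ enters with the right sign in the final estimate.
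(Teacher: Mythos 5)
Your proposal is correct and takes essentially the same route as the paper's (terse) proof: bound the conductor of $(\lambda,\chi)$ at $\infty'$ via Lemma \ref{2c555} together with Lemma \ref{4c754} (using that in Case 1 the whole extension $k(C)_+/k(C)$, hence every degree-$p$ subextension, is totally ramified at $\infty'$), and then apply Theorem \ref{2c43}. The extra details you supply --- the $+1$ bookkeeping at $\infty'$ in part (iii) and the exclusion of an unramified subextension of $k(C)_+/k(C)$ in Case 1 --- are exactly what the paper leaves implicit.
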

\begin{proof}
 i: The degree of the conductor of the corresponding extension is $s$ (see Lemma \ref{4c754} and Lemma \ref{2c555}). Hence the result
follows from Theorem \ref{2c43}.

 ii: Obvious.

iii: The degree of the conductor of the corresponding extension is $0$ (Lemma \ref{4c754}, Lemma \ref{2c555}). Hence the result follows
from Theorem \ref{2c43}.
\end{proof}

\subsubsection{Case 2}
Assume that $p=2$ and $\deg(h)=g$.
 Let $\lambda_2$ be the special character of $k^+$ corresponding to the unramified
subextension of $L/k(C)$ of degree $2$. More explicitly, we define
$\lambda_2 \in  k^{\vee}$, $c \mapsto (-1)^{\tr_{k/\F_2}(cd_1/h_g^2)} \in \C^*$ (Lemma \ref{4c438} and Proposition \ref{4c301}).
Put $\epsilon_C = (-1)^{\tr_{k/\F_2}(d_0/h_g^2)}$ (it is $-1$ if there is a constant
field extension). Let $\chi_2=(-1)^{\psi_C} \in \Pic^0_k(C)^{\vee}$.

\begin{lemma} \label{4c1112}
 The following hold for $\lambda \in k^{\vee}$ and $\chi \in \Pic^0_k(C)^{\vee}$.
\begin{enumerate}
 \item $c_{(\lambda,\chi)\cdot (\lambda_2,\chi_2)}=\epsilon_C c_{(\lambda,\chi)}$;
 \item if $\lambda \neq \chi_0, \lambda_2$, then $|c_{(\lambda,\chi)}| \leq (2g-2+s) \sqrt{q}$;
 \item $c_{(\chi_0,\chi_0)} = \# C(k)-1$;
 \item $c_{(\lambda_2,\chi_2)} = \epsilon_C \left( \# C(k)-1 \right)$;
 \item if $\chi \neq \chi_0$, then $|c_{(\chi_0,\chi)}+1| \leq (2g-2)\sqrt{q}$;
 \item if $\chi \neq \chi_2$, then $|c_{(\lambda_2,\chi)}+\epsilon_C| \leq (2g-2) \sqrt{q}$.
\end{enumerate}
\end{lemma}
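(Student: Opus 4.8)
The plan is to prove (i) and (iii) directly, to obtain (v) and (ii) by applying Theorem~\ref{2c43} to suitable cyclic subextensions of $k(C)_{+,[\infty']}/k(C)$, and then to read off (iv) and (vi) formally.

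First I would prove (i), which is the crux. Expanding the defining sum, $c_{(\lambda,\chi)\cdot(\lambda_2,\chi_2)}=\sum_{P\in C(k)^*}\lambda(x(P))\chi(\varphi_C(P))\cdot\lambda_2(x(P))\chi_2(\varphi_C(P))$, so it suffices to show $\lambda_2(x(P))\chi_2(\varphi_C(P))=\epsilon_C$ for every $P\in C(k)^*$. For such a $P$ the place $P|_{k(x)}$ corresponds to the monic polynomial $x-x(P)$, so $\Tv(P|_{k(x)})=x(P)$, $\fv(P/P|_{k(x)})=1$ and $\deg_k(P)=1$; Proposition~\ref{4c923} then gives $\psi_C([P]-[\infty'])=\tr_{k/\F_2}(x(P)d_1/h_g^2)+\tr_{k/\F_2}(d_0/h_g^2)$, whence $\chi_2(\varphi_C(P))=(-1)^{\psi_C([P]-[\infty'])}=\lambda_2(x(P))\cdot\epsilon_C$, and multiplying through by $\lambda_2(x(P))=\pm1$ gives the claim. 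Part (iii) is then immediate, since $c_{(\chi_0,\chi_0)}=\#C(k)^*=\#C(k)-1$.

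Next I would prove (v) and (ii) by character sum estimates. For (v), with $\chi\neq\chi_0$, let $N_\chi$ be the subextension of $k(C)_{[\infty']}/k(C)$ cut out by $\chi$; it is unramified, and being contained in the geometric extension $k(C)_{[\infty']}/k(C)$ (Proposition~\ref{2c843}) it is geometric with $\mathfrak f(\chi)=0$, so Theorem~\ref{2c43} over $\unr^1(N_\chi/k(C))=C(k)$ gives $|\sum_{P\in C(k)}\chi(\varphi_C(P))|\leq(2g-2)\sqrt q$, and since $\varphi_C(\infty')=0$ the left-hand side equals $|c_{(\chi_0,\chi)}+1|$. For (ii), with $\lambda\neq\chi_0,\lambda_2$, let $M$ be the subextension of $k(C)_{+,[\infty']}$ cut out by $(\lambda,\chi)$. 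Since $\lambda$ is a nontrivial character of the elementary abelian group $k^+=\Gal(k(C)_+/k(C))$, it cuts out a quadratic subextension $L_\lambda$ of $k(C)_+/k(C)$, and as $L_\lambda$ is not the unique unramified quadratic subextension $L=L_{\lambda_2}$, it is totally ramified at $\infty'$ and unramified elsewhere; Lemma~\ref{4c754} then gives $\mathfrak f(L_\lambda/k(C))=2\infty'$, and combining this with $\mathfrak f(\chi_0,\chi)=0$ via Lemma~\ref{2c555} yields $\mathfrak f((\lambda,\chi))=2\infty'$, of degree $s=2$. Writing $M=L_\lambda\cdot N_\chi$, the two factors meet only in $k(C)$ (one ramified, the other unramified at $\infty'$), so $\Gal(M/k(C))\cong\Z/2\times(\Pic^0_k(C)/\ker\chi)$, the inertia group at $\infty'$ is precisely the $\Gal(L_\lambda/k(C))$-factor, every other place is unramified, and hence the maximal everywhere-unramified subextension of $M/k(C)$ is $N_\chi$; since $N_\chi$ has full constant field $k$, so does $M$, i.e.\ $M/k(C)$ is geometric. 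Theorem~\ref{2c43} over $\unr^1(M/k(C))=C(k)^*$ then gives $|c_{(\lambda,\chi)}|\leq(2g-2+s)\sqrt q$. Finally, (iv) is (i) specialized to $(\lambda,\chi)=(\chi_0,\chi_0)$ together with (iii), and (vi) is (i) specialized to $(\lambda,\chi)=(\chi_0,\chi\chi_2)$ (using $\chi_2^2=\chi_0$) together with (v) and $|\epsilon_C|=1$.

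The one genuinely delicate point is the geometricity of $M$ in (ii). In contrast with Lemma~\ref{4c111}, the full extension $k(C)_{+,[\infty']}/k(C)$ need not be geometric --- it acquires a quadratic constant field extension exactly when $\epsilon_C=-1$ --- so Theorem~\ref{2c43} cannot be applied to it wholesale. The content of the ramification bookkeeping above is that this constant field extension sits diagonally across $L_{\lambda_2}$ and $\Pic^0_k(C)$, hence becomes invisible to $M$ as soon as $\lambda\neq\lambda_2$. Everything else is either formal manipulation or a routine conductor computation feeding into Theorem~\ref{2c43}.
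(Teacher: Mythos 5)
Your proposal follows essentially the same route as the paper: (i) via the pointwise identity $\lambda_2(x(P))\chi_2(\varphi_C(P))=\epsilon_C$ (the paper argues this through splitting behaviour in the two quadratic extensions, you read it off from the formula in Proposition \ref{4c923}; these are the same computation), (ii) and (v) via the conductor computations of Lemma \ref{4c754} and Lemma \ref{2c555} fed into Theorem \ref{2c43}, (iii) trivially, and (iv), (vi) formally from (i). Your explicit verification that the constant field extension (present exactly when $\epsilon_C=-1$) is cut out by $(\lambda_2,\chi_2)$ and so does not interfere when $\lambda\neq\lambda_2$ is a point the paper's proof passes over in silence, and it is genuinely needed for Theorem \ref{2c43}; making it explicit is an improvement.

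There is one imprecision in (ii) that you should repair. You define $M$ as the subextension cut out by $(\lambda,\chi)$ and then write $M=L_\lambda\cdot N_\chi$ with $\Gal(M/k(C))\cong\Z/2\times(\Pic^0_k(C)/\ker\chi)$; these two descriptions agree only when $\chi$ has odd order. When $\chi$ has even order (such characters exist, since $\psi_C$ forces $2\mid\#\Pic^0_k(C)$), the character $(\lambda,\chi)$ is \emph{not} injective on $\Gal(L_\lambda N_\chi/k(C))$, so Theorem \ref{2c43} cannot be applied to $L_\lambda N_\chi$ with this character as you state. The fix is standard and stays entirely within your argument: apply Theorem \ref{2c43} to $M'=(L_\lambda N_\chi)^{\ker(\lambda,\chi)}$, on which the induced character is injective by construction. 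The conductor you computed via Lemma \ref{2c555} is by definition $\mathfrak{f}(M'/k(C))=2\infty'$ of degree $s=2$; geometricity of $M'$ follows from your geometricity argument for $L_\lambda N_\chi$, since a subfield of a geometric extension is geometric; and $\unr^1(M'/k(C))=C(k)^*$ because the conductor of $(\lambda,\chi)$ at $\infty'$ is positive, so $M'/k(C)$ is ramified there. With that substitution the bound $|c_{(\lambda,\chi)}|\leq(2g-2+s)\sqrt{q}$ follows exactly as you intend, and the rest of your proof is correct.
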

\begin{proof}
i: Let $P \in C(k)^*$. We have $\lambda_2(x(P))\chi_2(\varphi_C(P))=\epsilon_C$ by construction. Indeed, if $\epsilon_C=1$, $\lambda_2(x(P))$ and
$\chi_2(\varphi_C(P))$ are equal. If $\epsilon_C=-1$, then a rational point splits in one extension iff it does not split in the other one, and hence
they differ by a sign. The result follows.

 ii: The degree of the conductor of the corresponding extension is $s$ (Lemma \ref{4c754}, Lemma \ref{2c555}). Hence the result
follows from Theorem \ref{2c43}.

iii: Obvious.

iv: Follows from ii and i.

v:  The degree of the conductor of the corresponding extension is $0$ (Lemma \ref{4c754}, Lemma \ref{2c555}). Hence the result follows
from Theorem \ref{2c43}.

vi: Follows from v and i.
\end{proof}

\subsection{Proof of theorem}

\begin{proof}[of Theorem \ref{4c911}]
Suppose $\langle \varphi_C(C_S) \rangle \subsetneq \Pic^0_k(C)$. Then
there exists a subgroup $H \subseteq \Pic^0_k(C)$ of prime index $l$ such that $\varphi_C(C_S) \subseteq H$. Let $\chi \in \Pic^0_k(C)^{\vee}$
be a character with kernel $H$. Let $f= \sum_{a \in k} f_a a \in \C[S] \subseteq \C[k]$. Then for $a \in k$ we have $f_a=\frac{1}{q} \sum_{\lambda \in k^{\vee}} f_{\lambda} \lambda(a)$ (Proposition \ref{45}).

By construction we have 
\begin{eqnarray*}
0&=&\sum_{P \in C(k)^*} f(x(P)) (\chi-1)(\varphi_C(P))= \frac{1}{q} \sum_{P \in C(k)^*} \sum_{\lambda \in
k^{\vee}}
f_{\lambda} \lambda(x(P)) (\chi-1)(P)\\
&=& \frac{1}{q}\sum_{\lambda \in k^{\vee}} f_{\lambda} \left( c_{(\lambda,\chi)}-c_{(\lambda,1)} \right). 
\end{eqnarray*}

Assume that $\chi \neq \chi_2$ if $p=2$ and $\deg(h)=g$. Choose $f$ such that
$\sh_k(S)=\#S/q \cdot C_k(f)$. Rewrite our
equation in the following way:
\begin{eqnarray*}
 f_1(c_{(1,1)}-c_{(1,\chi)}) = \sum_{\lambda \in k^{\vee}, \lambda \neq 1} f_{\lambda} \left( c_{(\lambda,\chi)}-c_{(\lambda,1)} \right).
\end{eqnarray*}
We will now put in the estimates of Lemma \ref{4c1112}. Notice first
\begin{eqnarray*}
 |c_{(1,1)}-c_{(1,\chi)}|&=&|(c_{(1,1)}+1)-(c_{(1,\chi)}+1)|=|\#C(k)-(c_{(1,\chi)}+1)| \\
&\geq& \#C(k)-(2g-2)\sqrt{q}>0.
\end{eqnarray*}
Taking absolute values gives
\begin{eqnarray*}
 |f_1|(\#C(k)+(2g-2+2s)\sqrt{q}) \leq 2(2g-2+s) \sqrt{q} \sum_{\lambda \in k^{\vee}} |f_{\lambda}|.
\end{eqnarray*}
Pick $f$ such that $C(f)=q/\#S \cdot \sh_k(S)$. Then we obtain
\begin{eqnarray*}
\frac{q}{\#S} \cdot \sh_k(S) = C(f) \geq \frac{\#C(k)+(2g-2+2s)\sqrt{q}}{2(2g-2+s) \sqrt{q}}
\end{eqnarray*}
and this gives us the required result. 

Assume that $p=2$, $\deg(h)=g$ and that $\chi=\chi_2$. Then one has
\begin{eqnarray*}
 0= \sum_{\lambda \in k^{\vee}} f_{\lambda} \left( c_{(\lambda,\chi_2)}-c_{(\lambda,1)} \right) = \sum_{\lambda \pmod{\langle \lambda_2 \rangle}}
\left( f_{\lambda}-\epsilon_C f_{\lambda \lambda_2} \right) \left( c_{(\lambda,\chi_2)}-c_{(\lambda,1)} \right). 
\end{eqnarray*}
Hence we have
\begin{eqnarray*}
(f_1-\epsilon_C f_{\lambda_2}) \left( c_{(1,1)}-c_{(1,\chi_2)} \right) = 1/2 \sum_{\lambda \in k^{\vee}, \lambda \neq 1,\lambda_2}
\left( f_{\lambda}-\epsilon_C f_{\lambda \lambda_2} \right) \left( c_{(\lambda,\chi_2)}-c_{(\lambda,1)} \right).
\end{eqnarray*}
The estimates of Lemma \ref{4c1112} give
\begin{eqnarray*}
 |f_1-\epsilon_C f_{\lambda_2}| (\#C(k)+(2g-2+2s) \sqrt{q}) \leq (2g-2+s) \sqrt{q} \sum_{\lambda \in k^{\vee}} |f_{\lambda}-\epsilon_C f_{\lambda
\lambda_2} |.
\end{eqnarray*}
Consider the expression $f_{\lambda} - \epsilon_C f_{\lambda \lambda_2}$. It is not hard to see that the image of the map $\C[S] \to \C[S]$, $f \mapsto f_{\lambda} - \epsilon_C f_{\lambda \lambda_2}$ is $\C[H_C \cap S]$ where $H_C= \{x \in k: \lambda_2(x)=-\epsilon_C\}$.
If $H_C \cap S = \emptyset$, then we have $C_S \subseteq \ker(\psi_C)
\stackrel{2}{\subseteq} \Pic^0_k(C)$ (Proposition \ref{4c923}). 
We can interpret our equation as a shape of $H_C \cap S$ and by choosing the function which obtains the shape of $H_C \cap S$ we obtain:
\begin{eqnarray*}
\frac{\#C(k)+(2g-2+2s) \sqrt{q}}{(2g-2+s)\sqrt{q}} \leq \frac{q}{\# (S \cap H_C)} \cdot \sh_k(S \cap H_C).
\end{eqnarray*}
\end{proof}

\section{The algorithm}

In this section we will describe how to find generators for $\Pic^0_k(C)$, that is, we give the proof of Theorem \ref{4cext}. We make a few
assumptions:
\begin{enumerate}
 \item We can do operations in $k$, a finite field of cardinality $q$, as addition and multiplication in time polynomial in $\log(q)$.
 \item Our hyperelliptic curve $C$ is given as in Subsection \ref{t1} and $k(C)/k(x)$ is totally ramified at
$\infty$.
 \item Divisors on $\Pic^0_k(C)$ are represented as
Galois-invariant divisors of $\di_{\overline{k}}^0(C_{\overline{k}})$, where divisors on
$\di^0_{\overline{k}}(C_{\overline{k}})$ are represented in $\Z^{(C(\overline{k}))}$.
\end{enumerate}

\begin{proof}[of Theorem \ref{4cext}]
Put $t=\left(2^4(2g+1)+2^2 \right)^2$. Deterministically construct $k'$, a finite field extension of $k$ of cardinality $q^i$ where $tq > q^i
\geq t$. This can be done in time $O(q^{1/2}i^4)$ (\cite{SHO2}), which is in $O(q^{1/2}g^2)$. Addition and multiplication can then be done in $k'$ in
time polynomial in $\log(g)$ and $\log(q)$.

Construct an interval $S$ of $k'$ with the following properties:
\begin{enumerate}
 \item $\#S \geq \lceil 4(2g+1) q^{i/2} \rceil = r$;
 \item $\#S=O(g^2 q^{1/2})$;
 \item if $p=2$ and $\deg(h)=g$, then $S \subseteq H_C$ (see Theorem \ref{4c911}).
\end{enumerate}
This can be done for the following reason. We claim that there are intervals of length between $r$ and $2r$. Indeed, write $r$ in basis
$p=\chart(k)$, say with main term $a_s p^s$. We claim that there is an interval of cardinality $r'=2a_s p^s$. Note that $r \leq r' \leq 2r$. We want
to apply Lemma \ref{3c900} (for $H_C$ in the special case), and for this it is enough to show that $4r\leq q^i$. 
Indeed, we have
\begin{eqnarray*}
q^i \geq q^{i/2} t^{1/2}=q^{i/2}\left(2^4(2g+1)+2^2 \right) \geq 4( 4(2g+1)q^{i/2}+1 ) \geq 4r.
\end{eqnarray*}
We claim that $\#S=O(g^2 q^{1/2})$. Indeed, $g q^{i/2} \leq g t^{1/2} q^{1/2}$, which is of order $O(g^2 q^{1/2})$ and the result follows. 

We will apply Theorem \ref{4c911} with our interval $S$. We have $\sh_{k^+}(S) \leq 2$ (Lemma \ref{3c99}) and $q^i \geq (4g-2)^2$ and
hence
$\#S \geq 2
\sh_{k+}(S)(2g-2+s)q^{i/2}$. Theorem \ref{4c911} (see Remark \ref{4c945}) gives $\langle \varphi_{C_{k'}}(C_{k',S}) \rangle=
\Pic^0_{k'}(C_{k'})$. 

We will construct $C_{k',S}$. For all $x \in S$ we look at the equation $y^2+h(x)y=f(x)$ and we have to solve this in $y$ (note that we have a smooth model of $C$). 

Assume that $p=2$. Note that $h \neq 0$. If $x$ is fixed, we need
to find $y$ with 
\begin{eqnarray*}
 \left(\frac{y}{h(x)} \right)^2-\frac{y}{h(x)}=\left(\frac{f(x)}{h(x)} \right)^2.
\end{eqnarray*}
This is an Artin-Schreier equation and solutions can easily be obtained by linear algebra. Each step here can be done in polynomial time in
$O(q^i)$, hence polynomial time in $\log(g)$ and $\log(q)$. Hence the total cost of this is
$O(g^{2+\epsilon}q^{1/2+\epsilon})$.

Assume that $p \neq 2$. Then for $x \in S$ we need to solve $y^2=f(x)$. First calculate a quadratic
non-residue in time $O(q^{i/4+\delta})$, that is, in time $O(\log(g)^{1/2}q^{1/4+\delta})$ (see \cite{SHP2}). Then we apply Tonelli-Shanks to solve
the equation for a fixed $x$ in time polynomial in $\log(q)$ (\cite[Lemma 3.4]{WOE}). Hence in total the cost of this step is
again $O(g^{2+\epsilon}q^{1/2+\epsilon})$.

Hence we have calculated $C_{k',S}$. Let $\infty''$ be the point at infinity of $C_{k'}$. The image of $C_{k',S}$ under $\varphi_{C_{k'}}: C_{k'}(k') \to
\Pic^0_k(C_{k'})$ generates the group $\Pic^0_k(C_{k'})$. It maps $P$ to $[P]-[\infty'']$. Since the norm map $\Norm_{k'k(C)/k(C)} : \Pic^0_k(C_{k'})
\to \Pic^0_k(C)$ is surjective (Corollary \ref{2c437}), a generating set of $\Pic^0_k(C)$ is given by 
\begin{eqnarray*}
\Norm_{k'k(C)/k(C)} \left(\varphi_{C_{k'}}(C_{k',S}) \right).
\end{eqnarray*}
More explicitly, for $P \in C_{k',S}$ we have
\begin{eqnarray*}
\Norm_{k'k(C)/k(C)}(\varphi_{C_{k'}}(P)) = - [k':k][\infty']+\sum_{g \in \Gal(k'/k)} [g(P)]. 
\end{eqnarray*}
\end{proof}

\bibliographystyle{acm}

%\bibliography{/vol/home/kostersmf/Desktop/referencelist}

\end{document}